\def\T{{\cal T}}
\def\P{{\cal P}}
\def\C{{\cal C}}
\def\A{{\cal A}}
\def\cS{{\cal S}}
\def\RSK{\operatorname{RSK}}
\def\la{\lambda}
\def\La{\Lambda}
\def\al{\alpha}
\def\be{\beta}
\def\eps{\varepsilon}
\def\si{\sigma}
\def\De{\Delta}
\def\sn{{\mathfrak S}_n}
\def\shape{\operatorname{sh}}
\def\Y{{\mathbb Y}}
\def\C{{\mathfrak C}}
\def\SW{\operatorname{SW}}
\def\Prob{\operatorname{Prob}}
\def\rev{\operatorname{rev}}
\def\evac{\operatorname{evac}}
\def\N{{\mathbb N}}
\def\tA{{\widetilde A}}
\def\m{{\cal M}}
\newtheorem{lemma}{Lemma}
\newtheorem{proposition}{Proposition}
\newtheorem{theorem}{Theorem}
\newtheorem{definition}{Definition}
\newtheorem{corollary}{Corollary}
\vershik\url{avershik@pdmi.ras.ru}
\natalia\url{natalia@pdmi.ras.ru}
\author{A.~M.~Vershik\thanks{%
St.~Petersburg Department of Steklov Institute of Mathematics, St.~Petersburg State University, Institute for Information Transmission Problems.
E-mail: \vershik.}
\and N.~V.~Tsilevich\thanks{%
St.~Petersburg Department of Steklov Institute of Mathematics.
E-mail: \natalia.}}
\title{The Schur--Weyl graph and Thoma's theorem\thanks{Supported by the RSF grant 21-11-00152.}}
\date{}
\begin{document}
\maketitle

\hfill{\it To the memory of S.~V.~Kerov {\rm(12.06.1946--30.07.2000)}}
\smallskip

\begin{abstract}
We define a graded graph, called the Schur--Weyl graph, which arises naturally when one considers simultaneously the RSK algorithm and the classical duality between representations of the symmetric and general linear groups. As one of the first applications of this graph, we give a new proof of the completeness of the list of discrete indecomposable characters of the infinite symmetric group.

{\bf Key words:} Schur--Weyl graph, RSK algorithm, Thoma theorem, central measures.
\end{abstract}

\section{Introduction}

In \cite{VK}, it was shown that the RSK algorithm generalized to infinite (Bernoulli) sequences allows one to enumerate all characters of the infinite symmetric group. However, this observation did not provide a new proof of the completeness of the list of indecomposable characters (i.e., of Thoma's theorem~\cite{Thoma}), but it revealed some profound properties of measures corresponding to characters. The so-called youngization homomorphism from the $L^2$ Bernoulli space onto the Hilbert space of functions on infinite Young tableaux endowed with the resulting central measure, first, is an isomorphism, as is proved in the very important papers~\cite{RS, Sniady}, and, second, allows one to give new realizations of some irreducible representations of the infinite symmetric group.

The finite RSK algorithm associates with a finite word in an ordered alphabet a pair of Young tableaux, a semistandard $P$-tableau and a standard $Q$-tableau. It is in this form that it is considered in most of the literature. The key element of our approach is a ``dynamic'' view (first suggested in~\cite{VK}) on the RSK algorithm, when it is applied to growing sequences of symbols and one considers the dynamics of the arising tableaux. The youngization homomorphism from~\cite{VK} uses the $Q$-tableau, which also just grows. However, the behavior of the $P$-tableau is much more complicated. Nevertheless, as this paper shows (see also~\cite{13}), tracking the dynamics of the $P$-tableau is also doable and very useful. It is in this way that we have led to the definition of the Schur--Weyl graph, which is a covering of the Young graph and reflects the dynamics of the $P$-tableau when the RSK algorithm is applied to a~growing sequence of symbols. Besides, this graph is related in a~natural way to the Schur--Weyl duality between representations of the general linear groups and symmetric groups. The relation between the RSK algorithm and the Schur--Weyl duality was  observed earlier, but the authors have not been able to find precise statements in the literature. It is clear that this duality must show itself also in the infinite version of the algorithm; further research is especially needed in this direction. We emphasize that in the general context of this paper, an important role is played by  the machinery of the theory of filtrations (decreasing sequences of measurable partitions), which was developed for the needs of ergodic theory, but turned out to be adequate for the problems under consideration (see, e.g.,~\cite{new}).

In this paper, we use the Schur--Weyl graph to obtain a new proof of Thoma's theorem~\cite{Thoma} in the case of discrete central measures. There are several proofs of this important theorem (\cite{VK81}, \cite{Ok}, \cite{KOO}, etc., see the survey~\cite{Three}); however, our argument is much simpler and of a purely combinatorial nature. Like many other proofs, it relies on the so-called ergodic method~\cite{V74}, but the direct application of this method to the Young graph entails the need for calculating the asymptotics of dimensions of skew Young diagrams; this is a difficult problem, which makes it necessary to develop special techniques, such as, e.g., shifted Schur functions. However, using the RSK algorithm and the Schur--Weyl graph allows us to reduce the problem under consideration to calculating the asymptotics of dimensions of ordinary Young diagrams, which is a much easier task. For another combinatorial argument for the same discrete case, see~\cite{14}. However, it is important to mention that the difficulty of the problem of describing the central measures is significantly different in the discrete and continuous (zero frequencies) cases.

The paper is organized as follows. In Section~\ref{sec:youngization}, we consider the youngization homomorphism from the space of infinite words to the space of infinite Young tableaux and its properties. In Section~\ref{sec:SW}, the Schur--Weyl graph is introduced and it is shown how the problem of describing the central measures on the Young graph can be reduced to the analogous problem for the Schur--Weyl graph. In Section~\ref{sec:ergrow}, we apply the ergodic method to solve this problem for the Schur--Weyl graph in the case of finitely many rows; the proof is split into lemmas, presented in Sections~\ref{s:densities}--\ref{s:calc},  and put together in Section~\ref{s:mainproof}. Finally, in Section~\ref{sec:gen} we explain how the obtained result can be extended to the case of central measures supported on tableaux with finitely many rows and columns.

The reader is assumed to be familiar with  the Robinson--Schensted--Knuth (RSK) algorithm  and its basic properties (see, e.g.,~\cite{Stanley, Fulton}); for  background on the representation theory of the infinite symmetric group (the Young graph, central measures, Thoma parameters, etc.), see, e.g.,~\cite{Kerov, BO}.

Throughout the paper, Young tableaux are filled with symbols from a~fixed alphabet~$\A$.

\section{Youngization}\label{sec:youngization}
Let $k\in\N$ and consider the alphabet $\A=\{1,2,\ldots,k\}$ consisting of $k$ ordered symbols. Consider the space  $X=\A^\infty$ of infinite words in the alphabet~$\A$.

By $\RSK(w)=(P(w),Q(w))$ we denote the result of applying the Ro\-bin\-son--Schensted--Knuth (RSK) correspondence  to a finite sequence (word)~$w$ in the alphabet~$\A$; thus, $P(w), Q(w)$ is a pair of Young tableaux of the same shape (with at most $k$~rows), which will be denoted by~$\shape(w)$; the tableau~$P(w)$ is semistandard, while the tableau~$Q(w)$ is standard. Given an infinite sequence
 $x\in X$, denote by $[x]_n\in \A^n$ its initial segment of length~$n$.

In \cite{VK}, the following map from the space of infinite sequences to the space~$\T(\Y)$ of infinite standard Young tableaux (or, which is the same, the set of infinite paths in the Young graph~$\Y$) was introduced.

\begin{definition}\label{def:youngization}
{\rm Apply the $\RSK$ algorithm to the initial segments~$[x]_n$ of a~sequence $x\in X$ and set $\RSK([x]_n)=(P_n(x),Q_n(x))$. It is clear from the construction of this algorithm that
$\lim\limits_{n\to\infty}Q_n(x)=Q(x)$ is an infinite standard Young tableau;  denote it by~$\pi(x)$.  The resulting map
$\pi:X\to\T(\Y)$
is called the {\it youngization}.}
\end{definition}

Endow $X$ with a Bernoulli measure $m_p^\infty$, where $p=(p_1,p_2,\ldots,p_k)$, $p_i=\Prob(i)$, and let $\m_{(p,0,0)}$ be the central measure on~$\T(\Y)$ with Thoma parameters~$(p,0,0)$ (note that the measure~$\m_{(p,0,0)}$ is supported on the subset of tableaux with at most $k$~rows). In~\cite{VK}, it is proved that
{\it the youngization is a homomorphism of measure spaces}
\begin{equation}\label{hom}
\pi:(X,m_p^\infty)\to(\T(\Y),\m_{(p,0,0)}).
\end{equation}

The following measurable partitions are defined in a natural way on the space of infinite Young tableaux~$\T(\Y)$:
the {\it cylinder} partition~$\xi_n$ of level~$n$, whose element is the set of infinite paths in the Young graph with fixed initial segment of length~$n$ and arbitrary ``tail'', and the {\it tail} partition~$\eta_n$ of level~$n$, whose element is the set of infinite paths in the Young graph with fixed $(n+1)$-tail and arbitrary beginning.
In~\cite{new}, we introduced the following partitions on the space~$X$.

\begin{definition}
{\rm The {\it Young cylinder partition} and the {\it Young tail partition} of the space~$X$ of infinite sequences are the partitions
 $\bar\xi_n:=\pi^{-1}\xi_n$ and $\bar\eta_n:=\pi^{-1}\eta_n$, respectively.}
\end{definition}

Clearly, the sequence of partitions~$\bar\xi_n$ is monotonically increasing, while the sequence of partitions~$\bar\eta_n$ (called the {\it Young filtration}) is monotonically decreasing. Recall (see, e.g.,~\cite{LLT}) that the {\it plactic class}~$\P_t$ (respectively,  the {\it coplactic class~$\C_t$}) corresponding to a given Young tableau~$t$ of size~$n$ is the set of all words~$u$ of length~$n$ such that $P(u)=t$ (respectively, $Q(u)=t)$. In~\cite{new}, it is shown that the Young partitions on the space~$X$ can be described as follows:
\begin{itemize}\itemsep=-1mm
\item the elements of~$\bar\xi_n$ are indexed by the standard Young tableaux~$t$ of size~$t$ and coincide with the coplactic classes~$\C_t$;
\item the elements of~$\bar\eta_n$ are indexed by the pairs~$(t,y)$, where $t$ is a semistandard Young tableaux of size~$n$ and $y\in X$, and have the form
$\{x\in X: [x]_n\in\P_t,\,(x_{n+1},x_{n+2},\ldots)=y\}$.
\end{itemize}
Also, in \cite{new} it is proved that
\begin{itemize}\itemsep=-1mm
\item the sequence of partitions $\bar\xi_n$ is total, i.e., converges in the weak topology to the partition~$\eps$ into singletons;
\item the sequence of partitions $\bar\eta_n$  is ergodic, i.e., converges in the weak topology to the trivial partition~$\nu$.
\end{itemize}

In particular, it follows that the youngization~\eqref{hom} is in fact an {\it isomorphism} of measure spaces, which gives (in this special case of finitely many rows) another proof of this result, originally proved in~\cite{Sniady}.

It is natural to pose the problem of describing all ergodic measures on the space~$X$ invariant with respect to the Young tail partitions. In the next section, we introduce the so-called Schur--Weyl graph, for which this problem is equivalent to the problem of describing all ergodic central measures.

\section{The Schur--Weyl graph}\label{sec:SW}

\begin{definition}\label{def:SW}
{\rm The {\it Schur--Weyl graph} $\SW_k$ is the graded graph defined as follows:
\begin{itemize}\itemsep=-1mm
\item the vertices of level~$n$ are all semistandard Young tableaux with $n$~cells filled with symbols from the alphabet~$\A$;

\item tableaux $t_1$ and $t_2$ are joined by an edge if $t_2$~is obtained from~$t_1$ by row inserting an arbitrary element~$i\in \A$.
\end{itemize}}
\end{definition}

Clearly, the graph $\SW_k$ has the following properties:
\begin{itemize}\itemsep=-1mm
\item every vertex has exactly $k$ outgoing edges;
\item the dimension of a vertex~$t$ (the number of paths  leading to this vertex from the initial vertex~$\emptyset$ of level~$0$) is~$\dim\la$, where $\la=\shape(t)$ is the shape of~$t$.
\end{itemize}

Note that the graph $\SW_k$ can be regarded as a covering of the Young graph~$\Y$ (more exactly, of the subgraph~$\Y_k$ in~$\Y$ consisting of the diagrams with at most $k$~rows): the fiber over a vertex (Young diagram)~$\la\in\Y$ consists of all semistandard tableaux of shape~$\la$. On the other hand, this construction is related in a natural way to the classical Schur--Weyl duality (see below).

\smallskip\noindent{\bf Example.}
For $k=2$, the Schur--Weyl graph~$\SW_2$ has the following form:
\begin{itemize}\itemsep=-1mm
\item the vertices of level~$n$ are all pairs of the form $(\la,r)$ where $\la=(\la_1,\la_2)$ is a Young diagram with $n$~cells and at most two rows and $r$ is an integer from the interval $0\le r\le\la_1-\la_2$.
Set $k(\la):=\la_1-\la_2$.
\item
$(\la,r)\nearrow(\mu,s)$ $\iff$ {\rm(i)}~$\la\nearrow \mu$ (i.e., $\mu$ is obtained from~$\la$ by adding one cell) and {\rm(ii)}~$s=r$ or $s=r+1$ if $k(\mu)=k(\la)+1$, and $s=r$ if $k(\mu)=k(\la)-1$.
\end{itemize}
\smallskip

\begin{lemma}\label{l:SW}
The set ${\cal T}_n(\SW_k)$ of all paths of length~$n$ \textup(respectively, the space~${\cal T}(\SW_k)$ of infinite paths\textup) in the Schur--Weyl graph  can be identified in a natural way with the set~$\A^n$ of words of length~$n$ \textup(the space $X=\A^\infty$ of infinite words\textup) in the alphabet~$\A$.
\end{lemma}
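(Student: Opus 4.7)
The plan is to construct explicit mutually inverse maps between paths in $\SW_k$ and words in $\A$, using the RSK row insertion in one direction and reverse row insertion (reverse bumping) in the other.

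First, from words to paths. Given a word $w = w_1 w_2 \cdots w_n \in \A^n$, I would set $t_j := P([w]_j)$ for $0 \le j \le n$, so $t_0 = \emptyset$ and $t_j$ is obtained from $t_{j-1}$ by row inserting $w_j$. By the definition of the edges of $\SW_k$, the chain $t_0 \nearrow t_1 \nearrow \cdots \nearrow t_n$ is a path of length $n$. This gives a map $\Phi : \A^n \to \T_n(\SW_k)$.

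Second, from paths to words. Here I use the fact that RSK row insertion is invertible: given an edge $t \nearrow t'$ in $\SW_k$, the shape $\shape(t')$ differs from $\shape(t)$ by a single corner cell $c$, and the standard reverse-bumping algorithm applied to $(t',c)$ produces a unique pair $(t'', i)$ with $t'' \leftarrow i = t'$; since by hypothesis there exists at least one $i \in \A$ with $t \leftarrow i = t'$, uniqueness forces $t'' = t$ and the letter $i$ inserted along the edge is unambiguously determined. Applying this to each edge of a path $t_0 \nearrow t_1 \nearrow \cdots \nearrow t_n$ produces a word $w_1 \cdots w_n \in \A^n$, yielding a map $\Psi : \T_n(\SW_k) \to \A^n$.

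By construction $\Phi$ and $\Psi$ are mutually inverse: applying RSK to a word and then reading off the inserted letters returns the original word, and conversely, iterating row insertion on the letters read off a path regenerates the same sequence of tableaux. For infinite paths, both $\Phi$ and $\Psi$ are compatible with truncation — the path of length $n$ obtained from $[x]_n$ is the length-$n$ prefix of the path obtained from $x$, and similarly for $\Psi$ — so passing to the projective limit gives a bijection $\T(\SW_k) \leftrightarrow X = \A^\infty$.

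The only nonroutine point is the uniqueness of the inserted letter along an edge of $\SW_k$, and this is simply the standard invertibility of RSK row insertion from a fixed new corner; no further obstacle is expected.
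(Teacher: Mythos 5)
Your proposal is correct and takes essentially the same route as the paper: the paper's one-line proof simply identifies a word $u$ with the path $\emptyset\nearrow P([u]_1)\nearrow\cdots\nearrow P([u]_n)$, leaving the bijectivity to the standard invertibility of row insertion from a fixed new corner, which is exactly the point you spell out via reverse bumping. No gap.
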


\begin{proof}
A sequence $u=(x_1,\ldots,x_n)\in \A^n$ is identified with the vertex $t=P(u)$ where $(P(u),Q(u))=\RSK(u)$.
\end{proof}

Thus, the space of functions on paths of length~$n$ in the graph~$\SW_k$ can be identified in a natural way with the space $(\mathbb C^k)^{\otimes n}$; then the subspace corresponding to the paths leading to tableaux with a given diagram~$\la$ (i.e., lying in the fiber over the vertex ${\la\in\Y_k}$ of the Young graph) gets identified with the irreducible, with respect to $\operatorname{GL}(k,\mathbb C)\otimes\sn$, component
 $M_\la\otimes V_\la$ of the Schur--Weyl decomposition, where $V_\la$ is the space of the irreducible representation of the symmetric group~$\sn$ corresponding to the diagram~$\la$ and $M_\la$  is the space of the irreducible representation of the group $\operatorname{GL}(k,\mathbb C)$ corresponding to this diagram. One may say that  $\sn$ acts on the ``base'', i.e., on the space spanned by the paths in the Young graph~$\Y_k$ leading to the given vertex~$\la$, while $\operatorname{GL}(k,\mathbb C)$ acts in the ``fibers'' of the graph~$\SW_k$ over each such vertex.

The description of Young partitions on the space~$X$ implies the following result.

\begin{proposition}\label{prop:equiv}
Under the identification from Lemma~{\rm\ref{l:SW}}, the Young tail partition~$\bar\eta_n$ on the space~$X$ is identified with the tail partition of level~$n$ on the space~${\cal T}(\SW_k)$ of infinite paths in the Schur--Weyl graph.
\end{proposition}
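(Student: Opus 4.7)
The plan is to unpack both partitions in terms of the $P$-tableau paths and verify that they coincide. The first step will be to invoke the identification from Lemma~\ref{l:SW}, which sends a sequence $x\in X$ to the path $(v_m)_{m\ge 0}$ in $\SW_k$ with $v_m=P([x]_m)$. Under this identification, the tail partition of level~$n$ on $\T(\SW_k)$ expresses equivalence of $x,x'$ as $P([x]_m)=P([x']_m)$ for all $m\ge n$, while the description of $\bar\eta_n$ recalled in Section~\ref{sec:youngization} expresses equivalence as $P([x]_n)=P([x']_n)$ together with $x_m=x'_m$ for all $m\ge n+1$. The whole proof will reduce to checking that these two conditions agree.

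For the easy direction, I would use the recursion $P([x]_m)=P([x]_{m-1})\leftarrow x_m$: if $P([x]_n)=P([x']_n)$ and $x_m=x'_m$ for every $m\ge n+1$, iterating this identity starting at level~$n$ gives $P([x]_m)=P([x']_m)$ at every subsequent level.

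For the converse direction, the key input will be a single structural property of $\SW_k$: two adjacent vertices determine the edge between them uniquely, or equivalently, for any fixed semistandard tableau~$s$, the map $i\mapsto s\leftarrow i$ from $\A$ into semistandard tableaux is injective. I expect this to follow immediately from the row-insertion rule applied to the first row, since distinct letters either bump different entries (producing different fillings of row~1) or produce first rows of different length, so the resulting tableaux differ. Once this is in hand, the hypothesis $P([x]_m)=P([x']_m)$ for all $m\ge n$ lets me recover $x_m$ at each level $m\ge n+1$ as the unique $i\in\A$ satisfying $P([x]_{m-1})\leftarrow i=P([x]_m)$, forcing $x_m=x'_m$; the case $m=n$ is simply the condition on $P$-tableaux at level~$n$.

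I do not anticipate a significant obstacle, since the proposition amounts to a bilingual translation of the same combinatorial datum. The one ingredient worth isolating is the injectivity of $i\mapsto s\leftarrow i$, which is precisely what makes $\SW_k$ behave as a labelled cover of $\Y_k$ and what allows the edges of $\SW_k$ to carry a well-defined label from~$\A$.
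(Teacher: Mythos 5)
Your proof is correct and follows essentially the same route as the paper, which simply derives the proposition from the description of $\bar\eta_n$ as the partition into sets $\{x\in X: [x]_n\in\P_t,\,(x_{n+1},x_{n+2},\ldots)=y\}$; you have merely spelled out the translation in both directions. The injectivity of $i\mapsto s\leftarrow i$ that you isolate is indeed the only nontrivial ingredient, and it is already implicit in the bijection of Lemma~\ref{l:SW} (equivalently, in the invertibility of row insertion given the added corner cell).
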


\begin{corollary}
Every \textup(ergodic\textup) central measure on the space~${\cal T}(\SW_k)$ of infinite paths in the Schur--Weyl graph~$\SW_k$ is the image, under the identification from Lemma~{\rm\ref{l:SW}}, of an \textup(ergodic\textup) measure on the space~$X$ invariant with respect to the Young filtration.
\end{corollary}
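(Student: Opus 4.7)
The plan is to deduce the corollary almost directly from Proposition~\ref{prop:equiv}, once one unpacks the standard reformulation of centrality as invariance under tail equivalence. First I would recall that, for any graded graph, a measure on the space of infinite paths is central if and only if, for every $n$ and every level-$n$ vertex~$v$, the conditional distribution of the initial segment of length~$n$ given that the path passes through~$v$ is uniform over the (finitely many) paths from the root to~$v$. Equivalently, a central measure is exactly a measure that is invariant under the tail equivalence relation at every level, i.e., that is measurable and ``symmetric'' with respect to the sequence of tail partitions on $\T(\SW_k)$.

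Next I would invoke Lemma~\ref{l:SW}, which provides a bijection $\Phi:\T(\SW_k)\to X$ matching paths of length~$n$ with words $u\in\A^n$ via $u\mapsto (P(u),Q(u))$, and thus identifies the two spaces of infinite trajectories as measure-theoretic objects. Then I would invoke Proposition~\ref{prop:equiv}, which says that, under~$\Phi$, the tail partition of level~$n$ on $\T(\SW_k)$ coincides with the Young tail partition~$\bar\eta_n$ on~$X$. Combining these, a measure $\m$ on $\T(\SW_k)$ is invariant under the tail partitions at every level if and only if its pushforward $\Phi_*^{-1}\m$ on~$X$ is invariant under every~$\bar\eta_n$, i.e., invariant with respect to the Young filtration $\{\bar\eta_n\}$.

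For the ergodic version, I would note that ergodicity of a measure with respect to a decreasing sequence of measurable partitions is a property depending only on the measure and on the partitions themselves (it amounts to triviality of the intersection $\sigma$-algebra, or equivalently to the measure being extreme in the simplex of invariant measures). Since $\Phi$ carries tail partitions on $\T(\SW_k)$ exactly to Young tail partitions on~$X$ by Proposition~\ref{prop:equiv}, ergodicity is preserved in both directions.

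I expect no real obstacle here: the content of the corollary is precisely a restatement of Proposition~\ref{prop:equiv} in the language of central measures, modulo the trivial but necessary observation that centrality is equivalent to tail-invariance. The only point requiring minor care is verifying that the bijection~$\Phi$ is an isomorphism of measurable spaces compatible with the natural filtrations on both sides, so that pushforward of measures is well-defined and commutes with conditioning on level-$n$ cylinders; this is immediate from the construction of $\RSK$ on initial segments.
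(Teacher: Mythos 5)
Your proposal is correct and follows exactly the route the paper intends: the paper states this corollary with no proof, treating it as an immediate consequence of Proposition~\ref{prop:equiv} once one recalls that centrality of a measure on a path space is equivalent to invariance under the tail partitions of every level. Your unpacking of that equivalence, together with the observation that ergodicity (triviality of the intersection $\sigma$-algebra) is carried over by the identification of Lemma~\ref{l:SW}, is precisely the standard argument being left implicit.
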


On the other hand, the following lemma shows that in order to find all central measures on the Young graph, it suffices to find all central measures on the Schur--Weyl graph.

\begin{lemma}\label{l:toThoma}
For every \textup(ergodic\textup) central measure~$\nu$ on the subgraph~$\Y_k$ of the Young graph there exists an \textup(ergodic\textup) central measure~$\mu$ on the Schur--Weyl graph~$\SW_k$ that projects to~$\nu$ under the youngization.
\end{lemma}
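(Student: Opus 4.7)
The plan is to construct $\mu$ as a weak-$*$ limit of explicit finite-level candidates. Let $\varphi_\nu$ be the harmonic function on $\Y_k$ associated with $\nu$, so that $\nu$ assigns mass $\dim\lambda\cdot\varphi_\nu(\lambda)$ to the set of paths through $\lambda\in\Y_k$ at level~$n$. For each $n$ I would define a probability measure $\mu_n$ on $X$ by prescribing the initial-segment distribution
\[
\mu_n\bigl([x]_n = u\bigr) \;=\; \frac{\varphi_\nu(\shape(u))}{\dim M_{\shape(u)}}, \qquad u\in\A^n,
\]
and extending past coordinate~$n$ in any fixed way (say, by i.i.d.\ uniform symbols from $\A$, independent of $[x]_n$). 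That this is a probability distribution on $\A^n$ uses the $\RSK$ count $|\{u\in\A^n:\shape(u)=\lambda\}|=\dim\lambda\cdot\dim M_\lambda$ together with the normalization $\sum_{\lambda}\dim\lambda\cdot\varphi_\nu(\lambda)=1$. Since the right-hand side depends only on $P(u)$, the measure $\mu_n$ is central on $\SW_k$ up to level~$n$ by construction.

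The key computation is that $\pi_*\mu_n$ already coincides with $\nu$ on the first $n$ levels. For $m\le n$, partition words $u\in\A^n$ by the pair of shapes $(\shape([u]_m),\shape(u))=(\lambda_m,\Lambda)$; the $\RSK$ bijection gives
\[
\bigl|\{u\in\A^n:\shape([u]_m)=\lambda_m,\ \shape(u)=\Lambda\}\bigr| \;=\; \dim M_\Lambda\cdot\dim\lambda_m\cdot\dim(\Lambda/\lambda_m),
\]
and summing the constant value $\varphi_\nu(\Lambda)/\dim M_\Lambda$ over this set and then over $\Lambda$ yields
\[
\mu_n\bigl(\shape([x]_m)=\lambda_m\bigr) \;=\; \dim\lambda_m\sum_{\Lambda}\varphi_\nu(\Lambda)\,\dim(\Lambda/\lambda_m) \;=\; \dim\lambda_m\cdot\varphi_\nu(\lambda_m),
\]
the last equality being the $(n-m)$-fold iterate of the harmonicity of $\varphi_\nu$ on $\Y_k$. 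Together with the centrality of $\mu_n$ at level~$m$, this is exactly the statement that the first-$m$-level marginal of $\pi_*\mu_n$ agrees with that of $\nu$.

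Since $X=\A^\infty$ is compact, a subsequence $\mu_{n_k}$ converges weakly to a probability measure $\mu^*$. Centrality at each fixed level~$m$ (equality of cylinder probabilities on plactic classes) and equality of the level-$m$ marginal of $\pi_*\mu_{n_k}$ with that of $\nu$ are closed conditions on cylinder probabilities, and hold for every $n_k\ge m$; they therefore pass to $\mu^*$, which is thus a central measure on $\SW_k$ satisfying $\pi_*\mu^* = \nu$. For the ergodic case, decompose $\mu^* = \int\mu_z\,d\rho(z)$ into ergodic components in the Choquet simplex of central measures on $\SW_k$; the pushforwards $\pi_*\mu_z$ are central on $\Y_k$ with $\rho$-average equal to $\nu$, so if $\nu$ is extreme then $\pi_*\mu_z = \nu$ for $\rho$-almost every $z$, and any such $\mu_z$ is an ergodic lift.

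The only nontrivial step is the $\RSK$-plus-harmonicity identity in the second paragraph; this is where the structure of $\SW_k$ as a covering of $\Y_k$ is genuinely used, while everything else reduces to compactness of measures on $\A^\infty$ and standard Choquet-simplex arguments.
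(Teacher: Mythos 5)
Your proof is correct, but it takes a genuinely different route from the paper's. The paper's argument is a one-line explicit construction: for each $\la$ it forms the ``maximal'' semistandard tableau $S_\la$ (each column of length $i$ filled with $k-i+1,\ldots,k$), observes that these tableaux span a subgraph of $\SW_k$ isomorphic to $\Y_k$ --- a path count shows every path of $\SW_k$ ending at $S_\la$ already lies in this subgraph, which is why the transported measure is central --- and pushes $\nu$ forward along this section of the covering; ergodicity is inherited immediately, and $\pi$ carries the lift back to $\nu$ because the $Q$-tableau records exactly the shapes along the path. You instead build a lift intrinsically from the harmonic function $\varphi_\nu$, prescribing level-$n$ distributions uniform on each fiber $\{u:\shape(u)=\la\}$, checking via the RSK count $\dim M_\La\cdot\dim\la_m\cdot\dim(\La/\la_m)$ and iterated harmonicity that the pushforward marginals already agree with $\nu$, and passing to a weak-$*$ limit; ergodicity then costs an extra Choquet step, which works because $\pi_*$ is affine and maps central measures on $\SW_k$ to central measures on $\Y_k$ (worth stating explicitly). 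One step you label ``by construction'' deserves a line: centrality of $\mu_n$ at levels $m<n$ amounts to the fact that $\mu_n(C_u)=\sum_{w\in\A^{n-m}}\varphi_\nu(\shape(P(u)\cdot w))/\dim M_{\shape(P(u)\cdot w)}$ depends only on $P(u)$, which holds because row-inserting a fixed suffix depends only on the $P$-tableau of the prefix. What your approach buys is independence from any explicit section of the covering $\SW_k\to\Y_k$, so it would survive in settings with no maximal-tableau embedding; what the paper's approach buys is brevity, an explicit formula for the lift (the ``most degenerate'' one in the sense of the Remarks in Section~4), and ergodicity for free.
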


\begin{proof}
For each Young diagram~$\la$, consider the ``maximal'' tableau~$S_\la$ of shape~$\la$ in which each column of length~$i$ is filled with the consecutive integers $k-i+1,\ldots,k$. Clearly, the subgraph in~$\SW_k$ consisting of the maximal tableaux is isomorphic to~$\Y_k$. The image~$\mu$ of the measure~$\nu$ under this isomorphism is as required. Obviously, if $\nu$~is ergodic, then $\mu$~is also ergodic.
\end{proof}

To describe the central measures, we use the {\it ergodic method}~\cite{V74}, in its version for  central measures on the path space of a graded graph presented in~\cite{Kerov}. Namely, let $\Gamma$~be a graded graph and $\T(\Gamma)$~be the space of infinite paths on~$\Gamma$. Given vertices $v,w$ in~$\Gamma$, denote by~$\dim(v,w)$ the number of paths  from~$v$ to~$w$, and let $\dim v:=\dim(\emptyset,v)$ be the dimension of~$v$. Also, denote by~$C_v$ the cylinder set of paths passing through~$v$.

\begin{theorem}[ergodic method \cite{Kerov}]\label{th:ergmethod}
Let $M$ be an ergodic central measure on the space~$\T(\Gamma)$. Then for $M$-a.e.\
path $t=(v_0,v_1,\ldots, v_n,\ldots)$, for all vertices $v\in\Gamma$, the following limits exist:
$$
\lim_{n\to\infty}\frac{\dim(v,v_n)}{\dim v_n}=\phi(v).
$$
The cylinder distributions of the measure~$M$ are given by the formula
$$
M(C_v)=\dim v\cdot\phi(v).
$$
\end{theorem}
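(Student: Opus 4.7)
The plan is to recognize the ratio $\dim(v,v_n)/\dim v_n$ as a conditional probability dictated by the centrality of $M$, and then to obtain its almost-sure convergence from a reverse martingale argument combined with tail triviality.

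First I would fix a vertex $v\in\Gamma$ of level~$k$ and, for each $n\ge k$, consider the conditional expectation $\mathbb{E}_M[\mathbf{1}_{C_v}\mid\mathcal{F}_n]$, where $\mathcal{F}_n$ is the $\sigma$-algebra generated by the tail $(v_n,v_{n+1},\ldots)$. The indicator $\mathbf{1}_{C_v}$ depends only on the first $k$ coordinates of a path, and by definition centrality means that, conditionally on passing through~$v_n$, all $\dim v_n$ paths from~$\emptyset$ to~$v_n$ are equally likely; in particular the initial segment and the tail are conditionally independent given~$v_n$. Hence one checks that
\begin{equation*}
\mathbb{E}_M\bigl[\mathbf{1}_{C_v}\bigm|\mathcal{F}_n\bigr](t)\;=\;\frac{\dim v\cdot\dim(v,v_n)}{\dim v_n},
\end{equation*}
with the convention that $\dim(v,v_n)=0$ when $v$ does not lie on any path ending at~$v_n$, since the number of paths from~$\emptyset$ to~$v_n$ passing through~$v$ is $\dim v\cdot\dim(v,v_n)$.

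Next, the $\sigma$-algebras $(\mathcal{F}_n)_{n\ge k}$ form a decreasing family whose intersection is the tail $\sigma$-algebra~$\mathcal{F}_\infty$, and ergodicity of~$M$ is by definition the $M$-triviality of~$\mathcal{F}_\infty$. The reverse martingale convergence theorem therefore yields, both $M$-a.s.\ and in~$L^1$,
\begin{equation*}
\frac{\dim v\cdot\dim(v,v_n)}{\dim v_n}\;=\;\mathbb{E}_M[\mathbf{1}_{C_v}\mid\mathcal{F}_n]\;\xrightarrow[n\to\infty]{}\;\mathbb{E}_M[\mathbf{1}_{C_v}\mid\mathcal{F}_\infty]\;=\;M(C_v).
\end{equation*}
This gives the existence of the limit $\phi(v)=M(C_v)/\dim v$ and the formula $M(C_v)=\dim v\cdot\phi(v)$. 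Since $\Gamma$ has countably many vertices, intersecting the full-measure sets over all $v\in\Gamma$ produces a single $M$-full-measure set of paths~$t$ on which the limits exist simultaneously for every~$v$.

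The real content, and hence the only genuine obstacle, is the first step: one must verify from the defining central property of~$M$ that the conditional distribution of the initial segment given the whole tail depends only on~$v_n$ and is the uniform measure on the $\dim v_n$ paths from~$\emptyset$ to~$v_n$. Once this Markovian identification is in place, the passage to the limit via reverse martingale convergence and tail triviality is entirely standard.
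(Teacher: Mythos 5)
The paper does not prove this statement at all: it is imported verbatim from Kerov's book (the bracketed citation in the theorem header), and the text that follows it merely restates it in the language of tail partitions and uniform measures $\mu_t^{(n)}$. So there is nothing in the paper to compare your argument against line by line. That said, your reverse-martingale plan is the standard and correct proof of this theorem, and it is essentially the argument one finds in the cited sources. Two points deserve to be written out rather than asserted. First, the identity $\mathbb{E}_M[\mathbf{1}_{C_v}\mid\mathcal{F}_n]=\dim v\cdot\dim(v,v_n)/\dim v_n$: with the usual definition of centrality (the measure of a finite cylinder $C_s$, $s$ a path from $\emptyset$ to $u$ of length $n$, equals $M(C_u)/\dim u$ independently of $s$), one gets $M(C_v\cap\{v_n=u\})=\dim v\cdot\dim(v,u)\cdot M(C_u)/\dim u$, and one must further check that conditioning on the whole tail beyond $v_n$, not just on $v_n$, does not change this — which again follows because cylinder measures depend only on the endpoint. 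You correctly flag this as the real content; it is true and short, but it is the step a referee would ask to see. Second, you take ``ergodic $=$ tail-trivial'' as a definition; in this paper's conventions (ergodicity of the decreasing sequence $\bar\eta_n$ means convergence to the trivial partition) that is indeed the definition, but if ergodicity is defined as extremality in the simplex of central measures, the equivalence with tail triviality is itself a (standard) theorem that should be invoked explicitly. With those two items supplied, your proof is complete; the countability of the vertex set then gives the simultaneous full-measure statement exactly as you say.
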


In other words, let $\C_n(t)$ be the class of the tail partition of level~$n$ containing the finite segment
$t_n=(v_0,\ldots,v_n)$ of an infinite path~$t$ and $\mu_t^{(n)}$~be the uniform measure on~$\C_n(t)$; then for $M$-a.e.\ path~$t$ the measures~$\mu_t^{(n)}$ weakly converge to~$M$.

\section{Realization of the ergodic method}\label{sec:ergrow}

Our purpose is to use the Schur--Weyl graph to prove Thoma's theorem, which gives the list of all ergodic central measures on the Young graph, in the case of measures supported on tableaux with finitely many rows and finitely many columns. To make the proof clearer, in this section we consider the case of finitely many rows, and then in Section~\ref{sec:gen} describe how the result can be extended to a more general case.

\begin{theorem}\label{th:Thoma}
All ergodic central measures on the subgraph~$\Y_k$ of the Young graph consisting of the diagrams with at most $k$~rows are exhausted by the Thoma measures~$\m_{(p,0,0)}$ where $p=(p_1,\ldots,p_k)$, $p_1\ge\ldots\ge p_k\ge0$, \break$p_1+\ldots+p_k=1$.
\end{theorem}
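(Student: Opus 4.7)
The plan is to run the ergodic method on the Schur--Weyl graph $\SW_k$ and then transport the result to $\Y_k$ via the shape projection. By Lemma~\ref{l:toThoma} every ergodic central measure $\nu$ on $\Y_k$ is the push-forward of some ergodic central $\mu$ on $\SW_k$, and by Lemma~\ref{l:SW} and Proposition~\ref{prop:equiv} this $\mu$ corresponds to an ergodic $\bar\eta$-invariant measure on $X=\A^\infty$. I will argue that every such $\mu$ must coincide with a Bernoulli measure $m_q^\infty$ for some probability vector $q=(q_1,\ldots,q_k)$; its push-forward via the youngization~\eqref{hom} is then the Thoma measure $\m_{(q,0,0)}$, which after rearranging the coordinates of $q$ in weakly decreasing order is exactly what the theorem asserts. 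For a fixed such $\mu$, Theorem~\ref{th:ergmethod} supplies, for $\mu$-a.e.\ $x\in X$ (writing $v_n:=P([x]_n)$), the limits
$$
\phi(v)=\lim_{n\to\infty}\frac{\dim(v,v_n)}{\dim v_n}
$$
for every semistandard~$v$, together with $\mu(C_v)=\dim v\cdot\phi(v)$. The task is therefore to prove the explicit formula $\phi(v)=q_1^{a_1}\cdots q_k^{a_k}$, where $(a_1,\ldots,a_k)=\operatorname{content}(v)$.

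Section~\ref{s:densities} should produce the asymptotic densities that enter this formula. Both the letter counts $c_i^{(n)}:=\#\{j\le n:x_j=i\}$ and the row lengths $\la_n^{(i)}:=\shape(v_n)^{(i)}$ are constant on plactic classes, hence $\bar\eta_n$-measurable; consequently $c_i^{(n)}/n$ and $\la_n^{(i)}/n$ have tail-measurable, and so by ergodicity of $\mu$ a.s.\ constant, limit superior and limit inferior. A supplementary estimate turns these into honest convergence, giving nonnegative constants $q_i$ and $\alpha_1\ge\cdots\ge\alpha_k$ with $\sum q_i=\sum\alpha_i=1$.

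The core of the argument (Section~\ref{s:calc}) is the limit
$$
\frac{\dim(v,v_n)}{\dim v_n}\;\longrightarrow\;q_1^{a_1}\cdots q_k^{a_k}
$$
for every SSYT $v$, along a $\mu$-typical path. The base case $v=\{i\}$ is the key: the ratio equals, by construction, the uniform probability among words in the plactic class $\P_{v_n}$ that the first letter equals $i$, and the goal is to show this probability converges to the empirical density $c_i^{(n)}/n$. The natural tools are the hook-length formula, which produces explicit limits for ratios $\dim\la'/\dim\la_n$ along a sequence with $\la_n^{(i)}/n\to\alpha_i$ whenever $\la'$ differs from $\la_n$ by a bounded number of cells, and an inverse-RSK decomposition (summing over the possible location of the cell labelled $n$ in the $Q$-tableau) that rewrites $\dim(\{i\},v_n)$ as a sum of such dimension ratios weighted by the filling of $v_n$ at the relevant corners. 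Extending the computation from $v=\{i\}$ to arbitrary~$v$ then yields the advertised product formula. The essential advantage of working on $\SW_k$ is that only ordinary Young-diagram dimensions $f^\la$ appear in these asymptotics, bypassing the skew-dimension / shifted-Schur analysis that a direct ergodic-method attack on $\Y_k$ would force.

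Putting the pieces together (Section~\ref{s:mainproof}) gives $\mu(C_v)=\dim v\cdot q^{\operatorname{content}(v)}$ for every cylinder, hence $\mu=m_q^\infty$; the image under~\eqref{hom} is $\m_{(q,0,0)}$, as required. The single real obstacle is the calculation in Section~\ref{s:calc}: the naive identification $\dim(v,v_n)=f^{\la_n/\shape(v)}$ is wrong --- already for $v=\begin{smallmatrix}1\end{smallmatrix}$ and $v_n=\begin{smallmatrix}1\\2\end{smallmatrix}$ the count is $0$ while $f^{(1,1)/(1)}=1$ --- so the filling of $v_n$ has to be carried through the entire estimate and combined with the row-length asymptotics $\la_n^{(i)}/n\to\alpha_i$ in just the right way to land on the letter powers $\prod q_i^{a_i}$ rather than on a shape-dimension ratio.
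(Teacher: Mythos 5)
There is a genuine gap, and it sits at the heart of your plan: the claim that every ergodic central measure $\mu$ on $\SW_k$ coincides with a Bernoulli measure $m_q^\infty$ is false. The lift produced by Lemma~\ref{l:toThoma} is itself a counterexample: for $k=2$ the maximal tableau of shape $(1)$ is the single cell filled with $2$, so every path in the support starts with the letter $2$ and hence $\mu(C_1)=0$, even though the density of $1$s along $\mu$-typical words is $\la_2^{(n)}/n\to p_2>0$. So your base case $\dim(\{i\},v_n)/\dim v_n\to c_i^{(n)}/n$ already fails for this measure, and the target identity $\phi(v)=q^{\operatorname{content}(v)}$ cannot hold for all ergodic central measures on $\SW_k$. (The paper's concluding Remarks make exactly this point: besides the nondegenerate Bernoulli measure there is a whole family of degenerate ergodic central measures on $\SW_k$ with prescribed densities, of which the Lemma~\ref{l:toThoma} lift is the ``most degenerate''.) Since Lemma~\ref{l:toThoma} is precisely how you propose to produce $\mu$ from a given $\nu$ on $\Y_k$, your argument would be asked to prove a false statement about the very measure it constructs.

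What the paper actually proves is the weaker statement that suffices: the \emph{projection} of any ergodic central measure on $\SW_k$ to $\Y_k$ is a Thoma measure. Concretely, one never needs $\mu_x(C_a)=\prod_i p_i^{m_i(a)}$ for individual words $a$; one only needs $\sum_{a\colon\shape(a)=\la}\mu_x(C_a)=\dim\la\cdot s_\la(p)$. When the densities are all distinct the row lengths separate, Lemma~\ref{l:asymp} gives the Bernoulli value for each $\phi(w)$, and the two statements coincide. But when some densities coincide, the differences $\al_i-\al_j$ of Frobenius coordinates need not tend to infinity, and $\phi(w)$ carries a nontrivial correction factor $\prod_{i<j}\frac{\al_i-\al_j-(m_i-m_j)}{\al_i-\al_j}$ that depends on the subsequence and is not a product of letter probabilities. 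The paper's resolution is that these corrections are ratios of Vandermonde determinants which cancel only after summing over all $w$ of a fixed shape (Lemma~\ref{l:vandermonde}, exploiting the symmetry of the Schur polynomial). Your outline has no mechanism for this cancellation because it aims at a pointwise product formula that is simply not true in the equal-densities (and in the degenerate-measure) regime. A secondary, smaller issue: the existence of the letter densities for a general ergodic central measure is not established by tail-measurability of $\limsup$ and $\liminf$ alone; the paper closes this by observing that if the densities fail to exist then the tail averages $\mu_x^{(n)}$ cannot converge, contradicting the ergodic method.
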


For convenience, the first two subsections of this section contain the lemmas used in the proof: in Section~\ref{s:densities} it is shown that, given a sequence $x\in X$, the normalized row lengths of the growing diagram~$\shape([x]_n)$ coincide with the limiting densities of elements in~$x$; Section~\ref{s:comb} contains combinatorial lemmas related to the properties of the RSK algorithm; and Section~\ref{s:calc} contains lemmas on the asymtpotics of dimensions of Young diagrams and on ``convolution'' of determinants.

\subsection{Densities of elements and normalized row lengths}\label{s:densities}

\begin{proposition}\label{prop:densgen}
Given $x\in X$, let $\shape([x]_n)=(\la_1^{(n)},\ldots,\la_k^{(n)})$. If the limiting densities of elements
\begin{equation}\label{dens}
p_j:=\lim_{n\to\infty}\frac{\#\{i\le n:x_i=j\}}{n},\quad j=1,\ldots,k,
\end{equation}
exist, then the normalized row lengths
\begin{equation}\label{rowdens}
\lim_{n\to\infty}\frac{\la_j^{(n)}}n=\hat p_j,\quad j=1,\ldots,k,
\end{equation}
exist too, where $\hat p_1,\ldots, \hat p_k$ are the densities $p_1,\ldots,p_k$ arranged in nonincreasing order: $\hat p_1\ge\ldots\ge \hat p_k$.
\end{proposition}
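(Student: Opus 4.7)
My plan is to combine Greene's theorem for words with uniform convergence of the renormalized letter-count functions, reducing the problem to an explicit linear-programming calculation. By Greene's theorem, the quantity $L_r(n):=\lambda_1^{(n)}+\cdots+\lambda_r^{(n)}$ equals the maximum total size of $r$ pairwise disjoint weakly increasing subsequences of $[x]_n$. It therefore suffices to show $L_r(n)/n\to\hat p_1+\cdots+\hat p_r$ for each $r=1,\ldots,k$ and then recover the individual row lengths by differencing in $r$.

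For the lower bound, I would pick the $r$ letters with the largest densities; their positions in $[x]_n$ form $r$ pairwise disjoint constant (hence weakly increasing) subsequences of total length $(\hat p_1+\cdots+\hat p_r)n+o(n)$. For the matching upper bound, I would parametrize each weakly increasing subsequence of the $k$-letter word by its ``slotting points'' $0=t_0\le t_1\le\cdots\le t_k=n$, so that the subsequence picks up every position $i\in(t_{j-1},t_j]$ with $x_i=j$. Writing $\tau^l_j:=t^l_j/n$ for the $l$-th subsequence, the hypothesis $N_j(n)/n\to p_j$ upgrades, via Polya's theorem on uniform convergence of monotone functions to a continuous limit, to uniform convergence of $\tau\mapsto N_j(\lfloor\tau n\rfloor)/n$ to $\tau\mapsto p_j\tau$ on $[0,1]$. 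An elementary computation then shows
\[
\frac{L_r(n)}{n}=\max_{\{\tau^l_j\}}\sum_{j=1}^k p_j\,m_j+o(1),\qquad m_j:=\Bigl|\bigcup_{l=1}^r(\tau^l_{j-1},\tau^l_j]\Bigr|.
\]
The disjointness of the $r$ subsequences enters through the remark that, for each $l$, every $\tau\in(0,1]$ lies in exactly one of the intervals $(\tau^l_{j-1},\tau^l_j]$; summing over $l$ and integrating yields $\sum_j m_j\le r$, while $0\le m_j\le 1$ is automatic. The linear program $\max\{\sum_j p_jm_j:0\le m_j\le 1,\ \sum_jm_j\le r\}$ has value $\hat p_1+\cdots+\hat p_r$, attained by setting $m_j=1$ on the $r$ indices with the largest $p_j$, and this optimum is actually realized by dedicating each of the $r$ subsequences to a single letter.

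Putting the two bounds together gives $L_r(n)/n\to\hat p_1+\cdots+\hat p_r$ for every $r$, and successive differences yield $\lambda_r^{(n)}/n\to\hat p_r$, as claimed. The main obstacle will be the upper bound: one must both translate the disjointness of the $r$ weakly increasing subsequences into the clean LP constraint $\sum_jm_j\le r$ (via the per-$l$ partition of $(0,1]$ described above), and justify replacing the finite-$n$ combinatorial maximum over slotting points by its continuous LP limit. The latter is where the mere pointwise hypothesis $p_j=\lim N_j(n)/n$ has to be amplified to uniform convergence of the letter-count profiles, for which Polya's theorem is exactly the right tool.
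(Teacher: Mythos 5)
Your proof is correct, but it takes a genuinely different route from the paper's. The paper argues by induction on the alphabet size $k$: the base case $k=2$ is handled by a hands-on analysis of the bracket-matching (pairing of $21$-factors) description of the second row, with an explicit $\eps$--$\delta$ contradiction argument showing that the number of paired coordinates has density $\min(p_1,p_2)$; the induction step then uses the fact that the symbol $k+1$ does not disturb the subtableau on $\{1,\dots,k\}$ and tracks how the new symbol is bumped row by row. You instead invoke Greene's theorem to identify the partial sums $\la_1^{(n)}+\cdots+\la_r^{(n)}$ with maximal unions of $r$ weakly increasing (WLOG disjoint) subsequences, upgrade the pointwise density hypothesis to uniform convergence of the letter-count profiles via Polya's theorem, and reduce the upper bound to the fractional-knapsack LP $\max\{\sum_j p_j m_j: 0\le m_j\le 1,\ \sum_j m_j\le r\}$ with value $\hat p_1+\cdots+\hat p_r$. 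Your LP constraint $\sum_j m_j\le r$ and the bound $\sum_j p_j m_j$ are both justified (the latter because disjointness means each letter-$j$ position is used at most once, so the letter-$j$ contribution is controlled by the measure of the union $\bigcup_l(\tau^l_{j-1},\tau^l_j]$ rather than by the sum of interval lengths over $l$ --- it would be worth making this step explicit, since summing the per-subsequence bounds over $l$ alone does not give $\sum_j p_j m_j$). What each approach buys: yours treats all rows simultaneously, avoids the induction entirely, and, since Greene's theorem is symmetric in rows and columns, would transfer almost verbatim to the mixed row/column alphabet of Section~\ref{sec:gen}; the paper's argument is more elementary and self-contained, and as a byproduct computes the limiting densities of symbols bumped into each individual row, information that is implicitly used later when discussing degenerate central measures and ``free'' elements. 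Note also that Greene's theorem is not an extraneous import here: the paper itself relies on it in the proof of Lemma~\ref{l:genduality}.
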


\begin{proof}
Induction on the size~$k$ of the alphabet~$\A$.

\begin{lemma}\label{l:density}
Proposition~{\rm\ref{prop:densgen}} holds for the alphabet $\A_2=\{1,2\}$.
\end{lemma}

\begin{proof}
It is convenient to regard a sequence from~$\A_2^n$ as a word $w=x_1\ldots x_n$ in the alphabet~$\A_2$.  Bracket every factor~$21$ in~$w$. The letters that are not bracketed form a subword~$w_1$ in~$w$. Bracket every factor~$21$ in~$w_1$. There remains a subword~$w_2$. Continue this procedure until we are left with a~word of the form $w_k=1^a2^b=x_{i_1}\ldots x_{i_{a+b}}$ with $a,b\ge0$. We say that the coordinates $x_{i_1},\ldots ,x_{i_{a+b}}$ of~$w$ are {\it free} and the other coordinates are {\it paired}. Then $\la_2^{(n)}$~is the number of pairs of paired coordinates in~$[x]_n$.

Note that if $p_1<p_2$, then for sufficiently large~$n$ all $1$s in~$[x]_n$ are paired, hence the second row grows with the same rate as the number of~$1$s, so the lemma in this case is obvious. Below we assume that $p_1\ge p_2$ (in particular, $p_2\le 1/2$).

Let $a_n:=\#\{i\le n:x_i=1\}$, $b_n:=\#\{i\le n:x_i=2\}$ and assume that  $\lim\limits_{n\to\infty}\frac{b_n}n=p_2$ (then, obviously, $\lim\limits_{n\to\infty}\frac{a_n}n=p_1$). Setting $r_n=\la_2^{(n)}$, we will prove that
$\lim\limits_{n\to\infty}\frac{r_n}n=p_2$. Assume to the contrary that (since, obviously,
${r_n\le b_n}$) there exists a subsequence $n_i\to\infty$ such that ${r_{n_i}<(p_2-\delta)n_i}$ with $\delta>0$. On the other hand, for sufficiently large~$i$ we have ${|a_{n_i}-p_1n_i|<t\delta n_i}$ and $|b_{n_i}-p_2n_i|<t\delta n_i$ by the definition of limit, where the coefficient ${t\in(0,1)}$ is to be chosen  later. Hence, the initial segment~$[x]_{n_i}$ contains $a_{n_i}-r_{n_i}>(p_1-p_2+(1-t)\delta)n_i$ free $1$s and ${b_{n_i}-r_{n_i}>(1-t)\delta n_i}$ free~$2$s, with all free $2$s being to the right of all free $1$s. Pick $m_i$  such that among the elements of~$[x]_{n_i}$ with indices larger than $m_i$ there are
$(1-t)\delta n_i$ free~$2$s (and no free $1$s). Then $m_i=n_i-(1-t)\delta n_i-2k_i$ where $k_i$ is the number of pairs of paired coordinates with indices larger than~$m_i$. Note that  $m_i> (p_1-p_2+(1-t)\delta)n_i\to\infty$.
Then
$$
\frac{b_{m_i}}{m_i}=\frac{b_{n_i}-(1-t)\delta n_i-k_i}{n_i-(1-t)\delta n_i-2k_i}=
\frac{\frac{b_{n_i}}{n_i}-(1-t)\delta-\frac{k_i}{n_i}} {1-(1-t)\delta-2\frac{k_i}{n_i}}<
\frac{p_2-(1-2t)\delta-u} {1-(1-t)\delta-2u},
$$
where  $u=\frac{k_i}{n_i}$.
For sufficiently small~$t$, the right-hand side decreases in~$u$, so it does not exceed the value at
$u=0$, i.e.,
$\frac{p_2-(1-2t)\delta} {1-(1-t)\delta}
<p_2-\eps$, where $\eps>0$ (for sufficiently small~$t$). Choosing such~$t$,
for sufficiently large~$i$ we have
$\frac{b_{m_i}}{m_i}\le p_2-\eps$,  contradicting the fact that $\lim\limits_{n\to\infty}\frac{b_n}n=p_2$.
\end{proof}

Thus, the induction base $k=2$ is proved. Now we prove the induction step
from~$k$ to~$k+1$.
Let $c_n:=\#\{i\le n:x_i=k+1\}$. Note that symbols~$k+1$ do not affect the growth of the subtableau in~$P([x]_n)$ consisting of $1,\ldots,k$. Hence, by the induction hypothesis, the row lengths
$\mu_1^{(n)},\ldots,\mu_k^{(n)}$ of this subtableau  satisfy the relations
$\lim\limits_{n\to\infty}\frac{\mu_j^{(n)}}{n-c_n}=\frac{q_j}{1-p_{k+1}}$ and, consequently,
$\lim\limits_{n\to\infty}\frac{\mu_j^{(n)}}n=q_j$, where $(q_1,\ldots,q_k)$ is the sequence
$(p_1,\ldots,p_k)$ arranged in nonincreasing order. Now, it follows from Lemma~\ref{l:density} that $\lim\limits_{n\to\infty}\frac{\la_1^{(n)}}n=\max(q_1,p_{k+1})=\hat p_1$ and the density of the elements~$k+1$ bumped into the second row is $\min(q_1,p_{k+1})$. Analogously, $\lim\limits_{n\to\infty}\frac{\la_2^{(n)}}n=\max(q_2,\min(q_1,p_{k+1}))=\hat p_2$, the density of the elements~$k+1$ bumped into the third row is  $\min(q_1,q_2,p_{k+1})$, and so on up to $\lim\limits_{n\to\infty}\frac{\la_2^{(k+1)}}n=\min(q_1,\ldots,q_k,p_{k+1})=\hat p_{k+1}$.
\end{proof}

\subsection{Combinatorial lemmas}\label{s:comb}

The {\it dual} Robinson--Schensted--Knuth algorithm RSK* differs from the ordinary one in that  an element~$i$ bumps the leftmost element {\it greater or equal to}~$i$. Let $\RSK^*(w)=(P^*(w),Q^*(w))$. Here $Q^*(w)$ is still a standard tableau, while
$P^*(w)$ is a {\it dual semistandard} tableau, i.e., a tableau obtained by transposing a semistandard one.

Let $w=w_1\ldots w_n\in\A^n$. Denote by~$\rev(w)$ the word obtained by reversing the order of letters in~$w$: $\rev(w)=w_n\ldots w_1$. The following relation between RSK and RSK* is well known (see, e.g., \cite{peal}):
\begin{equation}\label{duality}
\RSK(w)=(P,Q)\iff \RSK^*(\rev(w))=(P^t,\evac(Q^t)),
\end{equation}
where $^t$ stands for transpose and $\evac$ is the evacuation (Sch\"utzenberger involution).

It is also well known that the row insertion procedures underlying the $\RSK$ and $\RSK^*$ algorithms are invertible in the following sense. Let $w=w_1\ldots w_n\in\A^n$ and $w'=w_1\ldots w_{n-1}$. If $P(w)=T$ and we know the cell  added  at the last step of the algorithm (i.e., the diagram~$\shape(w')$), then  a~simple combinatorial procedure allows us to recover $P(w')$~and~$w_n$. Let $\square$ be a~corner cell of~$T$. Denote by $\tau(T,\square)$ and~$\eps(T,\square)$ (respectively, $\tau^*(T,\square)$ and $\eps^*(T,\square)$) the tableau and the element obtained in this way under the assumption that this cell has been added at the last step of the $\RSK$ (respectively, $\RSK^*$) algorithm.

Let $T$ be a semistandard tableau, $|T|=n$, $\shape(T)=\nu$. For $m\le n$, denote by $\cS_m(\nu)$ the set of tableaux of shape $\nu\setminus\mu$, where $|\mu|=m$, whose cells are filled with
integers $1,\ldots,m$ so that they decrease along each row and each column. Given $S\in\cS_m(\nu)$, we define a word $u=\Phi_T(S)\in\A^m$ as follows. Set ${T_0:=T}$ and then, by induction,
$$
u_j:=\eps^*(T_{j-1}^t,\square_j^t), \qquad T_j:=(\tau^*(T_{j-1}^t,\square_j^t))^t,\qquad j=1,\ldots,m,
$$
where $\square_j$ is the cell of~$S$ containing~$j$ and $\square^t_j$ is the corresponding cell  of~$S^t$. Also, denote by~$d(S)$ the dimension of the Young diagram obtained from~$\nu$ by deleting all cells of~$S$.

\begin{lemma}\label{l:focus}
Given $m\le n$ and $a\in\A^m$, set
$c_a(T):=\#\{y\in \A^n: {P(y)=T},\,\break [y]_m=a\}$. Then
$$
c_a(T)=\sum_{S\in\cS_m(\nu)\colon \Phi_T(S)=a}d(S).
$$
\end{lemma}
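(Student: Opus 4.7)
The plan is to use the duality \eqref{duality} to rephrase the count in terms of $\RSK^*$, and then stratify the resulting words by the tableau appearing after the first $n-m$ insertions.

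By \eqref{duality}, the map $y\mapsto z=\rev(y)$ is a bijection between words $y\in\A^n$ with $P(y)=T$ and words $z\in\A^n$ with $P^*(z)=T^t$; under it, the constraint $[y]_m=a$ becomes the constraint that the last $m$ letters of $z$ form $\rev(a)=a_m a_{m-1}\ldots a_1$. Hence $c_a(T)$ equals the number of such $z$. I would then stratify such $z$ by the shape $\mu$ of the intermediate tableau $P^*(z_1\ldots z_{n-m})$ obtained after the first $n-m$ steps of $\RSK^*$. The subsequent $m$ insertions add the cells of $\nu\setminus\mu$ and produce a standard recording tableau $R$ of skew shape $\nu\setminus\mu$ (labels $1,\ldots,m$ increasing along rows and columns, the cell labeled $k$ being added at the $k$th step). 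By the bijectivity of $\RSK^*$, for each fixed dual semistandard $P^*(z_1\ldots z_{n-m})$ of shape $\mu$ there are exactly $\dim\mu$ choices of the initial segment $z_1\ldots z_{n-m}$. Conversely, given $T^t$ and $R$, the pair $\bigl(P^*(z_1\ldots z_{n-m}),\rev(a)\bigr)$ is uniquely reconstructed by inverse $\RSK^*$ bumpings on $T^t$, removing the cells of $R$ in the order of decreasing labels $m,m-1,\ldots,1$.

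The remaining step is to identify this recording data with $\cS_m(\nu)$ via the relabeling $k\mapsto m+1-k$, which converts the increasing tableau $R$ into a decreasing tableau $S\in\cS_m(\nu)$ of the same skew shape, with $d(S)=\dim\mu$. Now $\Phi_T$ removes the corners of $S$ in the order of increasing $S$-labels $1,2,\ldots,m$, which in the original labels of $R$ is the order $m,m-1,\ldots,1$; hence $\Phi_T$ coincides with the inverse $\RSK^*$ procedure of the previous paragraph, and at its $j$th step it recovers the letter inserted at the $(m-j+1)$th forward step of $\RSK^*$ applied to $\rev(a)$, namely $a_j$. Thus $\Phi_T(S)=a_1\ldots a_m=a$ precisely when the last $m$ letters of $z$ equal $\rev(a)$, and summing $\dim\mu=d(S)$ over admissible $(\mu,R)$, equivalently over $S\in\cS_m(\nu)$ with $\Phi_T(S)=a$, yields the claimed identity.

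The main difficulty will be bookkeeping: keeping straight the composition of the word reversal, the tableau transpose, and the relabeling $k\mapsto m+1-k$, so that the word $u_1\ldots u_m$ produced by $\Phi_T$ appears in the original order $a_1,\ldots,a_m$ rather than in the reversed order. Once these conventions are aligned, the result is a direct consequence of the bijectivity of $\RSK^*$ together with the invertibility of the row insertion at each step.
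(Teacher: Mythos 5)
Your proposal is correct and follows essentially the same route as the paper: reduce via the duality \eqref{duality} to counting words $z$ with $P^*(z)=T^t$ whose final segment is $\rev(a)$, then stratify by the intermediate tableau and recording data of the last $m$ insertions, which is exactly the content the paper leaves implicit after its one-line reduction. Your expanded bookkeeping (the bijection between the recording tableau $R$ and $S\in\cS_m(\nu)$ via $k\mapsto m+1-k$ and transposition, and the count $\dim\mu=d(S)$ of initial segments with a fixed $P^*$-tableau) is accurate.
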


\begin{proof}
Given a word $w\in\A^n$, denote by~$[w]^m$ its final segment of length~$m$.
By~\eqref{duality} we have
$$
c_a(T)=\#\{z\in \A^n: P^*(z)=T^t, [z]^m=\rev(a)\},
$$
which implies the lemma.
\end{proof}

Given $u\in\A^n$, we define a word $\Psi(u):=u'_1\ldots u'_n$  as follows: $u'_i$ is the number of the column to which a cell has been added at the $i$th step of the algorithm~$\RSK^*(u)$.

\begin{lemma}\label{l:shape}
$Q(\rev(\Psi(u)))=Q(\rev(u))$. In particular,  $\shape(\rev([w]^m))=\shape(\rev([u]^m))$ for every $m\le n$.
\end{lemma}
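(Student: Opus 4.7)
The plan is to reduce the claim $Q(\rev(\Psi(u))) = Q(\rev(u))$ to the single statement $Q^*(\Psi(u)) = Q^*(u)$ about the dual algorithm. Once the latter is established, applying the duality~\eqref{duality} to both $u$ and $\Psi(u)$ yields
\[
Q(\rev(u)) = \evac(Q^*(u))^t = \evac(Q^*(\Psi(u)))^t = Q(\rev(\Psi(u))).
\]
The ``in particular'' statement about shapes of $\rev([\cdot]^m)$ then falls out for free, since the shape of the $P$-tableau after $m$ insertions of~$\RSK$ is encoded by the subtableau formed by the entries $1,\ldots,m$ of the recording tableau, and equal recording tableaux have equal subshapes.

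To prove $Q^*(\Psi(u)) = Q^*(u)$, I would induct on $k$, establishing the stronger structural invariant: writing $\la^{(k)} := \shape(P^*([u]_k))$, after $k$ steps of $\RSK^*$ applied to $\Psi(u)$ the current insertion tableau $\tilde P_k$ has the canonical form in which row~$i$ is exactly $(1,2,\ldots,\la_i^{(k)})$. Granting this invariant, the inductive step is almost automatic: when $c := u'_{k+1}$ is row-inserted into $\tilde P_k$ under the $\ge$-bumping rule of $\RSK^*$, each row $i$ with $\la_i^{(k)} \ge c$ has the value $c$ sitting in column~$c$, so $c$ bumps $c$ and the row shape does not change. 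This chain proceeds down column~$c$ until reaching the row $i_{k+1}$ where $\RSK^*(u)$ places its new cell; there $\la_{i_{k+1}}^{(k)} = c-1$, no entry in that row is $\ge c$, and $c$ appends at column $\la_{i_{k+1}}^{(k)} + 1 = c$. Thus $\tilde P_{k+1}$ gains its cell at $(i_{k+1}, c)$, exactly where $\RSK^*(u)$ places its cell, and row $i_{k+1}$ of $\tilde P_{k+1}$ becomes $(1,2,\ldots,c) = (1,2,\ldots,\la_{i_{k+1}}^{(k+1)})$, preserving the invariant.

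The main obstacle is isolating the correct invariant for $\tilde P_k$: since the $\ge$-bumping rule of $\RSK^*$ depends on the actual entries of the tableau and not merely its shape, nothing weaker than the ``row $i$ equals $(1,\ldots,\la_i^{(k)})$'' description will propagate cleanly through the induction. The argument depends crucially on the Young-diagram conditions that $u'_{k+1} = \la_{i_{k+1}}^{(k)}+1$ and $\la_{i_{k+1}-1}^{(k)} \ge u'_{k+1}$, which guarantee that the bumping chain along column $u'_{k+1}$ passes through exactly the rows $1,\ldots,i_{k+1}-1$ before terminating in row $i_{k+1}$. Once this invariant is secured, the shape growth of $\RSK^*(u)$ and of $\RSK^*(\Psi(u))$ coincide step by step, giving equality of their recording tableaux, and the lemma follows by duality.
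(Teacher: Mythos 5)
Your proposal is correct and follows essentially the same route as the paper: both reduce the lemma to the identity $Q^*(\Psi(u))=Q^*(u)$ and then conclude via the duality~\eqref{duality}. The only difference is that where the paper dispatches the key step by noting that $\Psi(u)$ is a lattice word (so that inserting the symbol $j$ under $\RSK^*$ adds a cell in column~$j$), you prove the same fact by an explicit induction on the canonical form of the intermediate insertion tableau --- a more detailed justification of the same observation.
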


\begin{proof}
Clearly, $\Psi(u)$ is a lattice word (i.e., every prefix of~$\Psi(u)$ contains at least as many $1$s as~$2$s, at least as many $2$s as~$3$s, etc.). Therefore, when calculating~$Q^*(\Psi(u))$, if the current element is~$j$, then a cell is added to the $j$th column, which means that
$Q^*(\Psi(u))=Q^*(u)$.
By~\eqref{duality}, it follows that $Q(\rev(\Psi(u)))=Q(\rev(u))$.
\end{proof}

Given $S\in\cS_m(\nu)$, let $w_S=w_1\ldots w_m\in\A^m$ where $w_i$ is the number of the row of~$S$ containing~$i$.

\begin{corollary}\label{cor:shape}
If $\Phi_T(S)=a$, then
$\shape(w_S)=\shape(a)$.
\end{corollary}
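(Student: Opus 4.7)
The plan is to reduce Corollary~\ref{cor:shape} to a single application of Lemma~\ref{l:shape}. To set things up, I would first produce a word $z\in\A^n$ with $P^*(z)=T^t$ whose last $m$ insertion steps under $\RSK^*$ successively add the cells $\square_m^t,\square_{m-1}^t,\ldots,\square_1^t$. Concretely one may take as the first $n-m$ letters of $z$ any word whose $P^*$-tableau equals the intermediate tableau $T_m^t$ of shape $\mu^t$ (for instance, the reversed row-reading word of $T_m$), and then append $\rev(a)=(a_m,\ldots,a_1)$. That this choice does what is claimed is exactly the invertibility of row-insertion that underlies the definition of $\Phi_T(S)=a$: un-inserting from $T^t$ at the corners $\square_1^t,\ldots,\square_m^t$ is, by construction, the reverse of the last $m$ steps of $\RSK^*$ on such a $z$. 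In particular, $[z]^m=\rev(a)$.

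Next I would apply Lemma~\ref{l:shape} to $z$ to obtain
$$\shape(\rev([\Psi(z)]^m))=\shape(\rev([z]^m)).$$
The right-hand side is just $\shape(a)$, since $\rev([z]^m)=\rev(\rev(a))=a$. For the left-hand side, $\Psi(z)_i$ is the column-index of the cell added at step~$i$ of $\RSK^*(z)$. For $i=n-j+1$ (as $j$ runs from~$1$ to~$m$) the cell added is $\square_j^t$, and its column-index in $\nu^t$ equals, by the definition of transposition, the row-index of $\square_j$ in $\nu$, which is $w_j$ by definition of $w_S$. Hence $[\Psi(z)]^m=(w_m,w_{m-1},\ldots,w_1)$, so $\rev([\Psi(z)]^m)=w_S$ and the left-hand side equals $\shape(w_S)$.

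Combining the two sides yields $\shape(w_S)=\shape(a)$, as required. The only real point requiring care is the bookkeeping identifying $[\Psi(z)]^m$ with $\rev(w_S)$: one must keep straight that the un-insertion order $j=1,2,\ldots,m$ of $\Phi_T$ corresponds to the reverse time-order in $\RSK^*(z)$, and that transposing $\nu$ to $\nu^t$ converts the column-indices read off by $\Psi$ into the row-indices recorded by $w_S$. Once this dictionary is in place, the proof is a one-line consequence of Lemma~\ref{l:shape}.
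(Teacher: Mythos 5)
Your proof is correct and follows essentially the same route as the paper's: the paper's one-line argument likewise encodes $\Phi_T(S)=a$ by a word $z=b\,\rev(a)$ with $\Psi(z)=v\,\rev(w_S)$ and then invokes Lemma~\ref{l:shape}; you have merely written out the construction of $z$ and the transposition bookkeeping in full detail.
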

\begin{proof}
Note that
$\Phi_T(S)=a \iff T=P(b\rev(a))$ and $\Psi(b\rev(a))=v\rev(w_S)$ where $b,v\in\A^{n-m}$, and the claim follows from Lemma~\ref{l:shape}.
\end{proof}

\subsection{Other lemmas}\label{s:calc}

Let $\La=\La_n$ be a Young diagram growing in such a way that there exist normalized row lengths~\eqref{rowdens}. Consider its Frobenius notation: $\La=(\al_1,\ldots,\al_k|\beta_1,\ldots,\beta_k)$.
Let $a\in\A^m$, and let $m_i$ be the multiplicity of~$i$ in~$a$ (thus, $m_1+\ldots+m_k=m$).
Denote by~$\La^a$ the diagram obtained from~$\La$ by deleting $m_i$~cells from the $i$th row (if it exists).

Hereafter, by $C_a:=\{y\in X:[y]_m=a\}$ we denote the cylinder set of sequences that begin with~$a$. Recall that $m_{\hat p}(C_a)=\prod \hat p_i^{m_i}$ is the Bernoulli measure of this set.

\begin{lemma}\label{l:asymp}
As $n\to\infty$,
\begin{equation}\label{l:limits}
\frac{\dim\La^a}{\dim\La}\sim m_{\hat p}(C_a)\cdot\prod_{i<j}\frac{\al_i-\al_j-(m_i-m_j)}{\al_i-\al_j}.
\end{equation}
In particular, if $\la_i-\la_j\to\infty$ for all $i<j$, then
$$
\lim_{n\to\infty}\frac{\dim\La^a}{\dim\La}=m_{\hat p}(C_a).
$$
\end{lemma}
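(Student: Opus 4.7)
The plan is to evaluate $\dim\La^a/\dim\La$ directly via the Weyl-type dimension formula for diagrams with at most $k$~rows, and then do asymptotic bookkeeping on each factor.

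Setting $\ell_i:=\la_i+k-i$, one has the standard formula
$$\dim\La=n!\cdot\frac{\prod_{1\le i<j\le k}(\ell_i-\ell_j)}{\prod_{i=1}^k\ell_i!},$$
and the passage from~$\La$ to~$\La^a$ corresponds to the substitutions $n\mapsto n-m$ and $\ell_i\mapsto\ell_i-m_i$, so
$$\frac{\dim\La^a}{\dim\La}=\frac{(n-m)!}{n!}\,\prod_{i=1}^k\frac{\ell_i!}{(\ell_i-m_i)!}\cdot\prod_{i<j}\frac{(\ell_i-\ell_j)-(m_i-m_j)}{\ell_i-\ell_j}.$$
Since the shift~$k-i$ is bounded, the hypothesis $\la_i^{(n)}/n\to\hat p_i$ gives $\ell_i/n\to\hat p_i$; the falling factorial $n(n-1)\cdots(n-m+1)$ grows like~$n^m$, while each rising factorial $\ell_i(\ell_i-1)\cdots(\ell_i-m_i+1)$ is asymptotic to~$\ell_i^{m_i}$, so the first two factors combine to something asymptotic to $\prod_i(\ell_i/n)^{m_i}\to\prod_i\hat p_i^{m_i}=m_{\hat p}(C_a)$.

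For the Vandermonde factor, the key identification is $\ell_i-\ell_j=(\la_i-i)-(\la_j-j)=\al_i-\al_j$, which is immediate from the definition of the modified Frobenius coordinates $\al_i=\la_i-i$. Substitution turns the ratio into precisely $\prod_{i<j}\frac{\al_i-\al_j-(m_i-m_j)}{\al_i-\al_j}$, which combined with the factorial asymptotic yields~\eqref{l:limits}. For the ``in particular'' part, if every $\al_i-\al_j\to\infty$, each factor of the Vandermonde ratio tends to~$1$, so only~$m_{\hat p}(C_a)$ survives.

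I do not expect a serious conceptual obstacle: once the Weyl formula is in hand, the argument is essentially mechanical. The only subtlety is in rows with $\hat p_i=0$, where both sides of~\eqref{l:limits} vanish in the limit and ``$\sim$'' must be read as equivalence of sequences both tending to~$0$; in the intended applications of the lemma the cells of~$a$ are bumped into rows of strictly positive limiting density, so this degeneracy does not affect the use of the result.
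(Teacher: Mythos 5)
Your proof is correct and follows essentially the same route as the paper: both write $\dim\La$ as an explicit product formula, take the ratio of $\dim\La^a$ to $\dim\La$ factor by factor, and observe that the factorial part is asymptotic to $\prod_i\hat p_i^{m_i}=m_{\hat p}(C_a)$ while the Vandermonde part gives exactly $\prod_{i<j}\frac{\al_i-\al_j-(m_i-m_j)}{\al_i-\al_j}$. The only cosmetic difference is that you use the Frobenius--Young formula with $\ell_i=\la_i+k-i$ (noting $\ell_i-\ell_j=\al_i-\al_j$), whereas the paper uses the dimension formula in Frobenius coordinates $(\al_1,\ldots,\al_k|\be_1,\ldots,\be_k)$ from~\cite{BO} and must additionally dispose of the bounded cross factors $\prod_{i,j}\frac{\al_i+\be_j+1}{\al_i-m_i+\be_j+1}$; your variant avoids these, and your caveat about rows with $\hat p_i=0$ applies equally to the paper's argument.
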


\begin{proof}
We use the formula for the dimension of a diagram in terms of its Frobenius notation (see~\cite[Proposition~1.3]{BO}):
$$
\dim\La=\frac{n!}{\prod_{i=1}^k\al_i!\beta_i!}\frac{\prod_{1\le i<j\le k}(\al_i-\al_j)(\beta_i-\beta_j)}{\prod_{i,j=1}^k(\al_i+\beta_j+1)}.
$$
Note that (for sufficiently large~$n$) for $\La^a=(\al'_1,\ldots,\al'_k|\beta'_1,\ldots,\beta'_k)$ we have $\al'_i=\al-m_i$, $\be'_i=\be_i$. Thus,
$$
\frac{\dim\La^a}{\dim\La}=\frac{(n-m)!}{n!}\prod_i\frac{\al_i!}{(\al_i-m_i)!}
\prod_{i<j}\frac{\al_i-\al_j-(m_i-m_j)}{\al_i-\al_j}\prod_{i,j}\frac{\al_i+\be_j+1}{\al_i-m_i+\be_j+1}.
$$
Since $\be_j\le k-1$ and $\frac{\al_i}{n}\to \hat p_i$ as $n\to\infty$,  the lemma follows.
\end{proof}

Denote by $\De(x)=\De(x_1,\ldots,x_n)$ the Vandermonde determinant in variables $x=(x_1,\ldots,x_n)$. Given an arbitrary Young diagram~$\mu$ with at most $n$~rows, set
$$
\De_\mu(x):=\sum_{\zeta}\De(x_1-\zeta_1,\ldots,x_n-\zeta_n),
$$
where the sum is over all different permutations~$\zeta$ of the sequence $\mu=(\mu_1,\mu_2,\ldots,\mu_n)$ (padded with $0$s  if necessary).

\begin{lemma}\label{l:vandermonde}
$\De_\mu(x)=n_\mu\De(x)$, where $n_\mu$ is the number of different permutations of~$\mu$.
\end{lemma}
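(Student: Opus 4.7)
The plan is to symmetrize the sum. Instead of summing over distinct permutations of~$\mu$, I would sum over all of~$\sn$, interpreting $\mu$ as a fixed sequence of length~$n$ (padded with zeros if needed). Each distinct permutation~$\zeta$ of~$\mu$ is obtained from exactly $n!/n_\mu$ elements $\sigma\in\sn$ via $\zeta_i=\mu_{\sigma(i)}$, so
$$
\Phi(x):=\sum_{\sigma\in\sn}\De\bigl(x_1-\mu_{\sigma(1)},\ldots,x_n-\mu_{\sigma(n)}\bigr)=\frac{n!}{n_\mu}\,\De_\mu(x).
$$
It therefore suffices to prove $\Phi(x)=n!\,\De(x)$.

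Next I would check that $\Phi$ is alternating in~$x$. Transposing $x_p$ and $x_q$ inside one summand gives a Vandermonde with two of its arguments swapped, contributing a factor $-1$; after reindexing by the bijection $\sigma\mapsto\sigma\circ(p\,q)$ of~$\sn$, the sum over~$\sigma$ is restored, so $\Phi$ changes sign under the transposition $(x_p\,x_q)$. Since $\Phi$ is an alternating polynomial in $x$, it is divisible by $\De(x)$, i.e., $\Phi(x)=f(x)\De(x)$ for some symmetric polynomial~$f$.

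To identify~$f$, I would do a degree count. Writing $\De(x-\mu_\sigma)=\prod_{i<j}\bigl((x_j-x_i)+(\mu_{\sigma(i)}-\mu_{\sigma(j)})\bigr)$, each summand is a polynomial in~$x$ of total degree $\binom{n}{2}$ whose top-degree component is $\prod_{i<j}(x_j-x_i)=\De(x)$ (the parameters $\mu_{\sigma(i)}$ enter only in strictly lower-degree terms). Summing over~$\sigma\in\sn$, the polynomial $\Phi$ has degree $\binom{n}{2}$ with top-degree part $n!\,\De(x)$, forcing $f\equiv n!$. Substituting back gives $\De_\mu(x)=n_\mu\,\De(x)$.

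No step is genuinely hard; the only subtlety is to keep straight the two commuting permutation actions, one on the values of the sequence and one on the arguments of the Vandermonde, so that the sign picked up from the alternation of~$\De$ exactly absorbs the reindexing of the outer sum.
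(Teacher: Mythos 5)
Your argument is correct, and it takes a genuinely different route from the paper. The paper reduces the identity $\sum_{\si\in\sn}\De(x_1-\mu_{\si(1)},\ldots,x_n-\mu_{\si(n)})=n!\,\De(x)$ to the single-shift case $\ell(\mu)=1$, where it invokes the known Vandermonde identity $\sum_{i=1}^n\De(x_1,\ldots,x_i-t,\ldots,x_n)=n\De(x)$ and then iterates it by induction on the number of nonzero parts of $\mu$. You instead prove the same symmetrized identity directly by a structural argument: the reindexing $\si\mapsto\si\circ(p\,q)$ shows the sum is alternating in $x$, hence divisible by $\De(x)$ with symmetric quotient $f$, and the degree count (each summand has degree $\binom{n}{2}$ with top homogeneous part exactly $\De(x)$) forces $f\equiv n!$. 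Your first step, the passage from the sum over distinct permutations $\zeta$ of $\mu$ to the sum over all of $\sn$ with the factor $n!/n_\mu$, is identical to the paper's. What your approach buys is self-containedness and completeness: you do not need the cited identity, and you avoid the paper's somewhat compressed induction step ("applying it successively several times"). What the paper's approach buys is brevity, given that the base identity is available as a reference, and it stays entirely within elementary manipulations of determinants without appealing to the divisibility theory of alternating polynomials.
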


\begin{proof}
It suffices to prove that
$$
\sum_{\si\in{\mathfrak S}_n}\De(x_1-\mu_{\si(1)},\ldots,x_n-\mu_{\si(n)})=n!\De(x),
$$
where ${\mathfrak S}_n$ is the symmetric group, since the sum in the left-hand side differs from $\De_\mu(x)$ exactly by the factor~$\frac{n!}{n_\mu}$. If the length~$\ell$ of~$\mu$  is equal to~$1$, this is a well-known identity for Vandermonde determinants (see, e.g., \cite[Proposition~7.192]{Grinberg}):
$$
\sum_{i=1}^n\De(x_1,\ldots, x_{i-1},x_i-t,x_{i+1},\ldots, x_n)=n\De(x).
$$
Applying it successively several times, one can easily prove the required identity by induction.
\end{proof}

\subsection{Proof of Thoma's theorem}\label{s:mainproof}

By Lemma~\ref{l:toThoma}, it suffices to prove that every ergodic central measure on the graph~$\SW_k$ projects to a Thoma measure.

We apply the ergodic method. Let $x\in X$, and assume that there exist densities~\eqref{dens}, and hence, by Proposition~\ref{prop:densgen}, normalized row lengths~\eqref{rowdens}. To simplify notation, we will assume that
$p_1\ge p_2\ge\ldots\ge p_k$; in the general case, the argument remains the same, with $p_i$~everywhere replaced by~$\hat p_i$.
Denote by~$\C_n(x)$ the class of the Young tail partition~$\bar\eta_n$ containing~$[x]_n$, and let $\mu_x^{(n)}$ be the uniform measure on~$\C_n(x)$.
Set  $T_n=P([x]_n)$ and $\La_n=\shape(T_n)$.

Fix $m\in\N$ and  $a\in\A^m$. We have
$$
\mu_x(C_a)=\lim_{n\to\infty}\mu_x^{(n)}(C_a)=\lim_{n\to\infty}\frac{c_a(x,n)}{\#\C_n(x)}=\lim_{n\to\infty}\frac{c_a(x,n)}{\dim\La_n},
$$
where $c_a(x,n)=\#\{y\in\C_n(x):[y]_m=a\}$.

Let $C_\la$ be the cylinder set of paths in the Young graph passing through a vertex~$\la$. It suffices to prove that for every Young diagram~$\la$ with $m$~cells,
\begin{equation}\label{nado}
\sum_{a\colon\shape(a)=\la}\mu_x(C_a)=\m_{(p,0,0)}(C_\la)=\dim\la\cdot s_\la(p),
\end{equation}
where $s_\la$ is a Schur function.

Given a word $w\in\A^m$, let $\La_n^w$ be the diagram obtained from~$\La_n$ by successively deleting a cell from the $w_1$th, $w_2$th, \ldots\ row (if  possible). In the notation of Lemma~\ref{l:asymp}, consider an arbitrary subsequence of indices~$n$ along which all differences~$\al_i-\al_j$ have a (finite or infinite) limit, and set
$$
\phi(w):=\lim_{n\to\infty}\frac{\dim\La^w_n}{\dim\La_n}.
$$
If cells cannot be deleted in this way, set $\phi(w)=0$. Also set $\Phi_{T_n}(w):=\Phi_{T_n}(S_w)$, where $S_w\in\cS_m(\La_n)$ is the tableau corresponding to this sequence of deleted cells. It follows from Lemma~\ref{l:focus} that
$$
\mu_x(C_a)=\sum_{w\colon\Phi_{T_n}(w)=a}\phi(w),
$$
and Corollary~\ref{cor:shape} of Lemma~\ref{l:shape} implies that
$$
\sum_{a\colon\shape(a)=\la}\mu_x(C_a)=\sum_{w\colon\shape(w)=\la}\phi(w).
$$

First, assume that
\begin{equation}\label{cond}
\la_i-\la_j\to \infty\quad\text{for any} \quad i<j
\end{equation}
(note that this condition is automatically satisfied for given~$i,j$ if  ${p_i\ne p_j}$). Then $\al_i-\al_j\to\infty$ for any $i<j$, and
by Lemma~\ref{l:asymp} we have
$\phi(w)=\prod_ip_i^{m_i(w)}$, where $m_i(w)$ is the multiplicity of~$i$ in~$w$. It follows that
$$
\sum_{a\colon\shape(a)=\la}\mu_x(C_a)=\sum_{w\colon\shape(w)=\la}\prod_i p_i^{m_i(w)}=\dim\la\cdot s_\la(p)
=\m_{(p,0,0)}(C_\la);
$$
here we have used the combinatorial definition of Schur functions
\begin{equation}\label{schur}
s_\la(p)=\sum_{S\colon\shape(S)=\la}\prod_ip_i^{m_i(S)},
\end{equation}
where  $m_i(S)$ is the multiplicity of~$i$ in a semistandard tableau~$S$, and the fact that for every tableau~$S$ of shape~$\la$ there are exactly $\dim\la$~words~$w$ such that $P(w)=S$. Thus, in the case under consideration equality~\eqref{nado} is proved.

Now assume that condition~\eqref{cond} is not satisfied. First, consider the case where it fails for all pairs~$i<j$, and hence ${p_1=\ldots=p_k=\frac1k}$. By Lemma~\ref{l:asymp},
$$
\phi(w)=\frac1{k^m}\cdot\prod_{i<j}\frac{\al_i-\al_j-(m_i-m_j)}{\al_i-\al_j}=
\frac1{k^m\De(\al)}\cdot\De(\al_1-m_1,\ldots,\al_k-m_k).
$$
Thus, the sum we are interested in is equal to
$$
\frac1{k^m\De(\al)}\sum_{w\colon\shape(w)=\la}\De(\al_1-m_1,\ldots,\al_k-m_k).
$$
Since the sum~\eqref{schur} defines a symmetric function, it follows that the last sum is a linear combination of determinants of the form~$\De_\mu(\al)$, and hence, by Lemma~\ref{l:vandermonde},
$$
\sum_{w\colon \shape(w)=\la}\phi(w)=\frac{\dim\la\cdot d_\la}{k^m},
$$
where $d_\la$ is the number of different semistandard tableaux of shape~$\la$. The right-hand side of the last equality is equal to  $\dim\la\cdot s_\la(\frac1k,\ldots,\frac1k)$, so equality~\eqref{nado} is proved in this case too.

Finally, consider the case where there are several families of equal densities. Namely, let
$q_1,\ldots,q_s$ be all different values of densities and $J_j:=\{i\colon p_i=q_j\}$, $j=1,\ldots,s$.  Set $n_j=\sum_{i\in J_j} m_i$.
By Lemma~\ref{l:asymp},
$$
\phi(w)=\prod_{j=1}^s\frac{q_j^{n_j}}{\De_j(\al)}\prod_{j=1}^s\De_j(\al-m),
$$
where $\De_j(\al)$ and $\De_j(\al-m)$ are the Vandermonde determinants in the variables~$\al_i$, $i\in J_j$,  and $\al_i-m_i$, $i\in J_j$, respectively. We are interested in the sum $\sum\limits_{w\colon\shape(w)=\la} \phi(w)$. Fix the block multiplicities $n_1,\ldots,n_s$ and consider the sum over all words~$w$ with these multiplicities. Similarly to the argument above, we conclude that, first, this sum splits into the product of sums over separate blocks and, second, the sum of the determinants~$\De_j(\al-m)$ in each block is equal to~$\De_j(\al)$ with an appropriate factor by Lemma~\ref{l:vandermonde}. The result is again~\eqref{nado}.

Thus, if $x\in X$ is a sequence such that the densities~\eqref{dens} exist, the weak limit~$\mu_x$ of the measures~$\mu_x^{(n)}$ projects to the Thoma measure~$\m_{(p,0,0)}$ under the projection of the Schur--Weyl graph onto the Young graph.  It follows (by considering convergent subsequences) that if the densities~\eqref{dens} do not exist, then the measures~$\mu_x^{(n)}$ have no limit.

The theorem is proved.

\smallskip\noindent{\bf Remarks.}
We say that a central measure~$\mu$ on the Schur--Weyl graph is {\it nondegenerate} if $\mu(C_a)\ne0$ for every finite word~$a$. One can easily see from the proof that a nondegenerate ergodic central measure with given densities on the Schur--Weyl graph is unique and coincides with the image of the corresponding Bernoulli measure under the isomorphism from Lemma~\ref{l:SW}. Apart from this measure, there exists a family of degenerate central measures, supported on tableaux with restrictions on the number of ``free'' elements (defined like in the proof of Lemma~\ref{l:density}). In particular, the measure constructed in Lemma~\ref{l:toThoma} is the ``most degenerate'' measure, supported on tableaux with no free elements.

\section{The general finite case}\label{sec:gen}

Let $\A_k:=\{1,\ldots,k\}$ and denote by
$\A^*_\ell:=\{i^*\colon i\in\A_\ell\}$ the alphabet {\it opposite} to~$\A_\ell$, in which the order  is defined as ${1^*>\ldots>\ell^*}$. Consider the alphabet $\tA=\A_k\cup\A^*_\ell$ with the order determined by the orders on $\A_k$~and~$\A^*_\ell$ and the convention that $a<b$ if $a\in\A_k$, $b\in\A^*_\ell$.
A Bernoulli measure $m_{\al,\be}$ on~$\tA$ is determined by vectors $\al=(\al_1,\ldots,\al_k)$ and $\be=(\be_1,\ldots,\be_\ell)$, where  $\al_i=\Prob(i)$, $\be_j=\Prob(j^*)$.

Following~\cite{VK}, consider the algorithm $\widetilde{\RSK}$ that acts on sequences
${x\in\tA^\infty}$ as follows: it applies the ordinary row insertion
to  symbols from~$\A$ (``row symbols'')  and the dual row insertion to symbols from~$\A^*$ (``column symbols''). Set
$\widetilde{\RSK}(x)=(\widetilde P(x),\widetilde Q(x))$. Here $\widetilde Q(x)$ is still a standard tableau, while
$\widetilde P(x)$ is a $(k,\ell)$-semistandard tableau (see~\cite{BR}).

As shown in~\cite{VK}, in this case the youngization (see Definition~\ref{def:youngization}) is again a homomorphism (and even, as proved in~\cite{Sniady}, an isomorphism) of measure spaces $\pi:(\tA^\infty,m_{\al,\be}^\infty)\to(\T(\Y),\m_{(\al,\be,0)})$, where $\m_{(\al,\be,0)}$ is the central measure on~$\T(\Y)$ with Thoma parameters~$(\al,\be,0)$.

In this case, we introduce the Schur--Weyl graph~$\SW$ similarly to Definition~\ref{def:SW}.  It is easy to see that Lemma~\ref{l:toThoma} remains valid (to obtain the ``maximal'' tableau with given diagram, we fill its first $k$~rows with elements of~$\A_k$ in the same way as before; the remaining part is of shape~$\nu^t$ where $\nu$ is a~diagram with at most $\ell$~rows, and we fill~$
\nu$ with elements of~$\A^*_\ell$ in a similar way and transpose). Hence, to prove Thoma's theorem in the general finite case, we can use the same approach as above: find the central measures on the Schur--Weyl graph by the ergodic method and prove that they project to Thoma measures. To reduce the proof of the general case to that considered above, we use the following lemma.

\begin{lemma}\label{l:genduality}
Given a word $w\in\tA^\infty$, denote by~$w^\dagger$ the word obtained from it by replacing each symbol~$i$ with~$i^*$ and each symbol~$j^*$ with~$j$.
Then $\widetilde Q(w^\dagger)=\widetilde Q(w)^t$.
\end{lemma}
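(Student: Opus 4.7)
The plan is to reduce the lemma to the equivalent statement that $\shape(\widetilde P([w^\dagger]_n))=\shape(\widetilde P([w]_n))^t$ for every~$n$: since~$\widetilde Q$ records the sequence of shapes grown by~$\widetilde P$ at each insertion step, transposing the sequence of shapes amounts to transposing each added cell, and hence transposes the standard tableau~$\widetilde Q$. I would then prove this shape identity by induction on~$n$. The base case is vacuous. The difficulty in the inductive step is that the cell added by the $(n{+}1)$st insertion depends on the full tableau $\widetilde P([w]_n)$, not merely on its shape. I would therefore strengthen the inductive hypothesis to a tableau-level relation $\widetilde P([w^\dagger]_n)=\Omega(\widetilde P([w]_n))$, where~$\Omega$ is a natural bijection from $(k,\ell)$-semistandard tableaux to $(\ell,k)$-semistandard tableaux of transposed shape.

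The induction step then reduces to a tableau lemma: for every $a\in\tA$ and every $(k,\ell)$-semistandard~$T$, $\Omega(\widetilde{\mathrm{ins}}(a,T))=\widetilde{\mathrm{ins}}(a^\dagger,\Omega(T))$, where $\widetilde{\mathrm{ins}}$ denotes the mixed insertion step (row insertion for row symbols, dual row insertion for column symbols). The combinatorial heart of this lemma is the classical duality between row and column insertion under transposition, combined with the fact that the~$\dagger$ involution interchanges the strict and weak bumping rules: row insertion of a row symbol~$i$ into~$T$ and dual row insertion of the column symbol~$i^*$ into the transpose of~$T$ produce bumping paths that are mirror images, because strict bumping in rows becomes weak bumping in columns under transposition. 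The inductive step would then be verified by tracing the bumping paths in the two cases and checking that~$\Omega$ maps one to the other; the argument for column symbols is symmetric.

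The main obstacle is pinning down~$\Omega$ precisely: small examples show that the naive ``transpose and apply~$\dagger$ entrywise'' map does not in general yield~$\widetilde P([w^\dagger]_n)$, so~$\Omega$ must incorporate additional canonical rearrangement of entries consistent with the mixed $(k,\ell)$-semistandard structure. The most conceptual way to sidestep this difficulty is via a Fomin-style growth-model description of~$\widetilde{\RSK}$, in which a word in~$\tA^\infty$ is encoded as a lattice path in a 2D grid (row symbols as horizontal steps, column symbols as vertical), and the shape sequence is determined by local rules that are manifestly symmetric under diagonal reflection of the grid. Under this reflection, $w$ maps to~$w^\dagger$ and every shape along the path is transposed, giving the lemma at once without ever having to name~$\Omega$ on tableaux.
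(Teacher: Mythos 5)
There is a genuine gap. Your reduction to the shape statement $\shape(\widetilde Q([w^\dagger]_n))=\shape(\widetilde Q([w]_n))^t$ for all~$n$ is exactly the paper's first step and is fine. But everything after that is a sketch of two alternative strategies, neither of which is actually carried out, and you yourself flag the fatal point: the inductive approach requires a bijection~$\Omega$ on $(k,\ell)$-semistandard tableaux intertwining the mixed insertion step with its $\dagger$-conjugate, and you report that the naive candidate fails on small examples and that ``pinning down $\Omega$'' is the main obstacle. Note moreover that the mere existence of such an~$\Omega$ already presupposes that $\widetilde P([w^\dagger]_n)$ is a function of $\widetilde P([w]_n)$ alone (i.e., that $\dagger$ descends to the super-plactic equivalence), which is itself a nontrivial claim you do not address. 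The fallback via a Fomin-style growth model is asserted, not proved: in the encoding you describe (row symbols as horizontal steps, column symbols as vertical steps) the lattice path forgets the actual values of the symbols, which the insertion certainly depends on, so ``the shape sequence is determined by local rules that are manifestly symmetric under diagonal reflection'' is not a statement that can be checked as written. The classical growth-diagram reflection symmetry exchanges $P$ and $Q$ (inversion of the word/permutation); it does not transpose shapes, which is what you need here.

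The paper's proof avoids insertion dynamics altogether. It characterizes $\shape(\widetilde Q(w))$ by Greene-type invariants (Proposition~1 of~\cite{VK}): the sum of the lengths of the first $r$ rows (respectively, columns) of the shape equals the maximum cardinality of a union of $r$ increasing (respectively, decreasing) subsequences of~$w$, where ``increasing'' means $a\nearrow b$ iff $a<b$ or $a=b\in\A_k$, and ``decreasing'' means $a\searrow b$ iff $a>b$ or $a=b\in\A^*_\ell$. The involution $\dagger$ visibly exchanges these two relations, hence exchanges increasing and decreasing subsequences, hence exchanges the row invariants and the column invariants; this transposes the shape of every prefix at once. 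If you want to salvage your argument, replace the induction by this invariant-based computation; otherwise you must actually construct~$\Omega$ (or the growth rules) and verify the intertwining, which is precisely the work your write-up defers.
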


\begin{proof}
It suffices to prove that $\shape(\widetilde Q(w^\dagger))=\shape(\widetilde Q(w))^t$ for every word ${w\in\tA^\infty}$.
Given $a,b\in\tA$, we write $a\nearrow b$ if $a<b$ or $a=b\in\A_k$, and $a\searrow b$ if $a>b$ or $a=b\in\A^*_\ell$. A word $u_1\ldots u_s\in\tA$ is said to be increasing (decreasing) if $u_i\nearrow u_{i+1}$ (respectively,  $u_i\searrow u_{i+1}$) for all $i=1,\ldots, s-1$. Clearly, $u$~ is increasing (decreasing) if and only if $u^\dagger$~is decreasing (increasing). Now the lemma follows from~\cite[Proposition~1]{VK}, according to which the length of the first row (column) of the diagram~$\shape(\widetilde Q(w))$ is the length of the largest increasing (decreasing) subsequence in~$w$, the sum of the lengths of the two first rows (columns) is the maximum cardinality of the union of two increasing (decreasing) subsequences in~$w$, etc.
\end{proof}

Given $x\in\tA^\infty$, denote by $\la_1'^{(n)},\ldots,\la_\ell'^{(n)}$ the column lengths of the diagram~$\shape([x]_n)$. In addition to the densities~\eqref{dens} and the normalized row lengths~\eqref{rowdens}, consider the densities
\begin{equation}\label{denscol}
q_j:=\lim_{n\to\infty}\frac{\#\{i\le n:x^*_i=j^*\}}{n},\quad j^*=1^*,\ldots,\ell^*,
\end{equation}
and the normalized column lengths
\begin{equation}\label{coldens}
\lim_{n\to\infty}\frac{\la_j'^{(n)}}n=\hat q_j,\quad j^*=1^*,\ldots,\ell^*.
\end{equation}

\begin{proposition}\label{prop:densities}
If there exist densities~\eqref{dens}~and~\eqref{denscol}, then there exist normalized row and column lengths \eqref{rowdens}~and~\eqref{coldens}, where $\hat p_1,\ldots, \hat p_k$ \textup(respectively, $\hat q_1,\ldots, \hat q_\ell$\textup) are the densities $p_1,\ldots, p_k$ \textup(respectively, $q_1,\ldots,q_\ell$\textup) arranged in nonincreasing order.
\end{proposition}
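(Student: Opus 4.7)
The plan is to reduce Proposition~\ref{prop:densities} to Proposition~\ref{prop:densgen} in two steps.

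\textbf{Step 1: Column lengths reduce to row lengths via $\dagger$.}
Lemma~\ref{l:genduality} gives $\widetilde Q(x^\dagger)=\widetilde Q(x)^t$, and since the shape of an infinite path is determined by its $\widetilde Q$-tableau, this yields $\shape([x^\dagger]_n)=\shape([x]_n)^t$ for every~$n$. In particular, the normalized column lengths of $\shape([x]_n)$ equal the normalized row lengths of $\shape([x^\dagger]_n)$. Because the operation $\dagger$ exchanges row and column symbols, the column densities $q_j$ of~$x$ are precisely the row densities of~$x^\dagger$. Thus the column-length part of the proposition for~$x$ is the row-length part of the proposition applied to~$x^\dagger$, and it suffices to prove the row-length assertion for an arbitrary word in~$\tA^\infty$.

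\textbf{Step 2: Row lengths via longest-increasing subsequences.}
I would use the Greene-type characterization quoted in the proof of Lemma~\ref{l:genduality} (Proposition~1 of~\cite{VK}):
$$
\la_1^{(n)}+\cdots+\la_r^{(n)}=L_r([x]_n),
$$
where $L_r(w)$ is the maximum cardinality of a union of $r$ pairwise disjoint $\nearrow$-increasing subsequences of~$w$. Because row symbols precede column symbols in the order on~$\tA$, any $\nearrow$-increasing subsequence consists of a weakly increasing block of row symbols followed by a strictly increasing block of column symbols, the latter of length at most~$\ell$. Letting $y^{(n)}\in \A_k^{n-c_n}$ denote the sub-word of $[x]_n$ of row symbols only (where $c_n$ counts column symbols in $[x]_n$), this gives
$$
L_r([x]_n)=L_r^{\mathrm{row}}(y^{(n)})+O(1),
$$
with the $O(1)$ term uniformly bounded by $r\ell$, and $L_r^{\mathrm{row}}$ denoting the analogous quantity for the ordinary RSK on~$\A_k$. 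If the densities $p_i$ and $q_j$ exist, then $|y^{(n)}|=n(1-\sum_j q_j)+o(n)$ and the empirical densities of its letters tend to $p_i/(1-\sum_j q_j)$. Applying Proposition~\ref{prop:densgen} to~$y^{(n)}$, re-expressed through its Greene-type form, yields
$$
\lim_{n\to\infty}\frac{L_r^{\mathrm{row}}(y^{(n)})}{n}=\hat p_1+\cdots+\hat p_r,\qquad r=1,\ldots,k.
$$
Taking successive differences produces the required limits $\la_j^{(n)}/n\to\hat p_j$ for $j\le k$. For $j>k$ the $(k,\ell)$-hook-shape restriction gives $\la_j^{(n)}\le \ell$, so $\la_j^{(n)}/n\to 0=\hat p_j$ automatically; and column lengths are then handled by Step~1.

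\textbf{Main obstacle.}
The delicate point is the reduction $L_r([x]_n)=L_r^{\mathrm{row}}(y^{(n)})+O(1)$: one direction follows by restricting $r$ disjoint $\nearrow$-increasing subsequences of $[x]_n$ to row-symbol positions and discarding at most $r\ell$ column entries, while the converse uses that any weakly increasing row-symbol subsequence in~$y^{(n)}$ can be freely extended by appending column symbols of~$[x]_n$ (all of which lie above every row symbol). Once this identity is verified, the rest of the argument is bookkeeping between the two normalizations (by~$n$ and by $|y^{(n)}|$) that is automatic from the existence of the densities~$q_j$. The only mild subtlety is that Proposition~\ref{prop:densgen} is a statement about row lengths; one must re-read its proof (or invoke Greene's theorem) to obtain the equivalent statement about $L_r^{\mathrm{row}}$, but both formulations are a priori equivalent.
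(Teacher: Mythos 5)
Your proof is correct, but it reaches the conclusion by a different mechanism than the paper. Both arguments share the same two ingredients: the involution $w\mapsto w^\dagger$ of Lemma~\ref{l:genduality}, which converts the column-length claim for $x$ into the row-length claim for $x^\dagger$, and Proposition~\ref{prop:densgen} applied to the subword $y^{(n)}$ of row symbols. They differ in how the information about $y^{(n)}$ is transferred to the rows of $\shape([x]_n)$. The paper only uses the containment of the row-symbol subtableau in the full tableau, which gives $\varliminf_{n}\la_j^{(n)}/n\ge\hat p_j$ and, dually, $\varliminf_{n}\la_j'^{(n)}/n\ge\hat q_j$; it then closes the gap by the total-mass constraint (the ``obvious remaining part''): since the hook shape has $n$ cells and $\sum_j\hat p_j+\sum_j\hat q_j=1$, all these liminfs are forced to be limits with the stated values. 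You instead prove the row limits outright via the Greene invariants, using the sandwich $L_r^{\mathrm{row}}(y^{(n)})\le L_r([x]_n)\le L_r^{\mathrm{row}}(y^{(n)})+r\ell$ --- the key point being that a $\nearrow$-increasing subsequence contains at most $\ell$ column symbols --- and then take successive differences; the column part follows by running the same argument on $x^\dagger$. This costs an extra appeal to the Greene-type characterization (already quoted from \cite[Proposition~1]{VK} in the proof of Lemma~\ref{l:genduality}, and, for the ordinary RSK on $\A_k$, needed to restate Proposition~\ref{prop:densgen} in terms of $L_r^{\mathrm{row}}$), but it buys independence from the normalization identity and makes the paper's final step fully explicit. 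Two small points you should add: the degenerate case $\sum_j q_j=1$ must be treated separately (there $\la_j^{(n)}\le\ell$ for all $j$, so the row limits are trivially $0=\hat p_j$), and the ``free extension by column symbols'' remark in your converse direction is unnecessary --- a union of $r$ disjoint weakly increasing row-symbol subsequences is already a union of $r$ disjoint $\nearrow$-increasing subsequences of $[x]_n$, which is all the lower bound requires.
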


\begin{proof}
Assume that the densities~\eqref{dens}~and~\eqref{denscol} exist. Since elements from~$\A^*_\ell$ do not affect the dynamics of the subtableau consisting of elements from~$\A_k$, it follows from Proposition~\ref{prop:densgen} that $\varliminf\limits_{n\to\infty}\frac{\la_j^{(n)}}n\ge \hat p_j$.
Applying Lemma~\ref{l:genduality}, we see  that $\varliminf\limits_{n\to\infty}\frac{\la_j'^{(n)}}n\ge \hat q_j$. The remaining part of the proof is obvious.
\end{proof}

Then the proof follows the same scheme as for Theorem~\ref{th:Thoma}  with natural changes. Recall (see, e.g.,~\cite{VK81}) that the cylinder distributions of the Thoma measure~$\m_{(\al,\be,0)}$ are given by the formula
$$
\m_{(\al,\be,0)}(C_\la)=\dim\la\cdot\tilde s_\la(\al,\be),
$$
where $\tilde s_\la(\al,\be)$ are generalized (``hook'') Schur functions. For them, an analog of formula~\eqref{schur} holds in which the sum is over $(k,\ell)$-semistandard tableaux (see~\cite{BR}). It is not difficult to show that identity~\eqref{duality} takes the form
$$
\widetilde\RSK(w)=(P,Q)\iff \widetilde\RSK^*(\rev(w))=(P^t,\evac(Q^t)),
$$
where the $\widetilde\RSK^*$ algorithm applies the ordinary row insertion to column symbols and the dual one to row symbols. When calculating~$\Phi_T(S)$, at each step we apply
$\eps^*$~and~$\tau^*$ for row symbols and $\eps$~and~$\tau$ for column symbols; then Lemma~\ref{l:focus} remains without change. When calculating~$\Psi(u)$, we set $u'_i$~to be the number of the column (respectively, row) to which a cell is added at the $i$th step of the algorithm~$\widetilde\RSK^*(u)$ if $u_i\in\A_k$ (respectively, $u_i\in\A^*_\ell$); then Lemma~\ref{l:shape} (with $Q$ replaced by $\widetilde Q$) and Corollary~\ref{cor:shape} remain valid. In the asymptotics from Lemma~\ref{l:asymp}, similar factors corresponding to columns appear, and the remaining part of the proof goes without significant changes.

Thus, we obtain Thoma's theorem for the general finite case.

\begin{theorem}\label{th:genThoma}
All ergodic central measures on the Young graph supported on tableaux with at most $k$~rows and at most $\ell$~columns are exhausted by the Thoma measures~$\mu_{(\al,\be,0)}$.
\end{theorem}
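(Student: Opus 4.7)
The plan is to mirror the three-step proof of Theorem~\ref{th:Thoma}, with the dual alphabet $\tA=\A_k\cup\A^*_\ell$ replacing~$\A$. First, the extension of Lemma~\ref{l:toThoma} indicated in the paragraph before Lemma~\ref{l:genduality} reduces the task to showing that every ergodic central measure on the generalized Schur--Weyl graph~$\SW$ projects to a Thoma measure~$\m_{(\al,\be,0)}$. I would then apply the ergodic method: for $x\in\tA^\infty$ on a full-measure set, Proposition~\ref{prop:densities} supplies both the densities $p_i,q_j$ and the normalized row and column lengths $\hat p_i,\hat q_j$ of $\La_n:=\shape([x]_n)$, and it suffices to identify the weak limit $\mu_x$ of the uniform measures~$\mu_x^{(n)}$ on Young tail classes by verifying the cylinder identity $\sum_{a\colon\shape(a)=\la}\mu_x(C_a)=\dim\la\cdot\tilde s_\la(\al,\be)$ for every $\la$.

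The technical heart is the generalized analogue of Lemma~\ref{l:asymp}. Writing $\La_n=(\al_1,\ldots,\al_k\mid\be_1,\ldots,\be_\ell)$ in Frobenius notation and observing that removing the symbols of $a\in\tA^m$ with row-multiplicities~$m_i$ and column-multiplicities~$m^*_j$ subtracts $m_i$ from~$\al_i$ and $m^*_j$ from~$\be_j$ \emph{independently}, the same Frobenius dimension manipulation as in the proof of Lemma~\ref{l:asymp} yields
$$
\frac{\dim\La_n^a}{\dim\La_n}\sim\prod_i\hat p_i^{m_i}\prod_j\hat q_j^{m^*_j}\prod_{i<i'}\frac{\al_i-\al_{i'}-(m_i-m_{i'})}{\al_i-\al_{i'}}\prod_{j<j'}\frac{\be_j-\be_{j'}-(m^*_j-m^*_{j'})}{\be_j-\be_{j'}}.
$$
Thus the row and column Vandermonde corrections decouple cleanly. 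Combined with the hybrid versions of Lemmas~\ref{l:focus}--\ref{cor:shape} sketched after Lemma~\ref{l:genduality}, this produces $\sum_{a\colon\shape(a)=\la}\mu_x(C_a)=\sum_{w\colon\shape(w)=\la}\phi(w)$, where $\phi(w)$ is the limit above.

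In the nondegenerate case (all $\hat p_i$ distinct, all $\hat q_j$ distinct) the Vandermonde ratios tend to~$1$, so $\phi(w)=\prod_i\hat p_i^{m_i(w)}\prod_j\hat q_j^{m^*_j(w)}$, and summing over words $w$ of shape~$\la$---using that exactly $\dim\la$ words insert to each $(k,\ell)$-semistandard tableau of shape~$\la$ under $\widetilde\RSK$---produces the $(k,\ell)$-semistandard sum that by~\cite{BR} equals $\dim\la\cdot\tilde s_\la(\hat p,\hat q)$. In the degenerate case I would block row indices by equal values of~$\hat p_i$ and column indices by equal values of~$\hat q_j$, then apply Lemma~\ref{l:vandermonde} separately to each row-block and each column-block; because $\phi(w)$ already factorizes into a row part and a column part, the Vandermonde convolution argument of Section~\ref{s:mainproof} runs on each side without essential change and the two halves multiply. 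Considering convergent subsequences forces the measures~$\mu_x^{(n)}$ to have no limit when the densities fail to exist, exactly as in the rows-only case.

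The main obstacle is the combinatorial bookkeeping of the hybrid algorithm. One has to verify the modified duality $\widetilde\RSK(w)=(P,Q)\iff\widetilde\RSK^*(\rev(w))=(P^t,\evac(Q^t))$ and check that the redefined maps $\Phi_T$ (using $\eps^*,\tau^*$ on row symbols and $\eps,\tau$ on column symbols) and $\Psi$ (recording column numbers for row symbols and row numbers for column symbols) do output a word $w_S\in\tA^m$ with $\shape(w_S)=\shape(\Phi_T(S))$, so that the analogue of Corollary~\ref{cor:shape} still feeds the summation identity. Once this hybrid version of Section~\ref{s:comb} is in place, the asymptotic factorization and the hook Schur function identity become routine extensions of the row-only proof.
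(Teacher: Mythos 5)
Your proposal follows essentially the same route as the paper: reduce to the generalized Schur--Weyl graph via the extended Lemma~\ref{l:toThoma}, apply the ergodic method using Proposition~\ref{prop:densities}, extend Lemma~\ref{l:asymp} so that the Frobenius arms and legs contribute decoupled factors $\prod_i\hat p_i^{m_i}\prod_j\hat q_j^{m^*_j}$ with separate row and column Vandermonde corrections, run the hybrid versions of Lemmas~\ref{l:focus}--\ref{cor:shape}, and conclude with the hook Schur function identity of~\cite{BR}, treating degenerate densities block by block via Lemma~\ref{l:vandermonde}. This matches the paper's (admittedly sketchy) argument in Section~\ref{sec:gen}, and you correctly identify the hybrid combinatorial bookkeeping as the step the paper leaves to the reader.
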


\end{document}